\documentclass[12pt]{amsart}

\usepackage[a4paper,margin=1.2in]{geometry}

\RequirePackage{amsthm,amsmath,amsfonts,amssymb}
\RequirePackage[authoryear]{natbib}
\RequirePackage[colorlinks,citecolor=blue,urlcolor=blue]{hyperref}
\RequirePackage{graphicx}

\usepackage{geometry}
\usepackage{enumerate}
\usepackage{mathtools}
\usepackage{amsfonts}
\usepackage{mathgreeks}
\usepackage{tikz}
\usetikzlibrary{patterns}
\usepackage{diagbox}

\usepackage{mathrsfs}
\usepackage{bbm}

\newtheorem{theorem}{Theorem}

\newtheorem{lemma}[theorem]{Lemma}
\newtheorem{proposition}[theorem]{Proposition}

\newtheorem{example}{Example}
\newtheorem{claim*}{Claim}

\numberwithin{equation}{section}
\numberwithin{example}{section}
\numberwithin{table}{section}

\numberwithin{figure}{section}
\renewcommand{\theenumi}{\roman{enumi}}

\newcommand{\figref}[1]{Figure~\ref{fig:#1}}

\newcommand{\secref}[1]{Section~\ref{sec:#1}}

\newcommand{\appref}[1]{Appendix~\ref{app:#1}}
\newcommand{\appsref}[1]{Appendices~\ref{app:#1}}

\newcommand{\exref}[1]{Example~\ref{ex:#1}}

\newcommand{\lemref}[1]{Lemma~\ref{lem:#1}}

\newcommand{\propref}[1]{Proposition~\ref{prop:#1}}

\newcommand{\thmref}[1]{Theorem~\ref{thm:#1}}

\newcommand{\tabref}[1]{Table~\ref{tab:#1}}

\newcommand{\cH}{\mathcal{H}}

\newcommand{\cR}{\mathcal{R}}

\newcommand{\cB}{\mathcal{B}}
\newcommand{\cP}{\Omega}

\newcommand{\bd}{d}

\newcommand{\Xdata}{X}

\newcommand{\cF}{\mathcal{F}}

\newcommand{\cJ}{\mathcal{J}}
\newcommand{\cS}{\mathcal{S}}
\newcommand{\cU}{\mathcal{U}}
\newcommand{\cV}{\mathcal{V}}
\newcommand{\cW}{\mathcal{W}}
\newcommand{\cI}{\mathcal{I}}

\renewcommand{\P}{\text{P}}
\newcommand{\cA}{\mathcal{A}}

\newcommand{\cE}{\mathcal{E}}

\allowdisplaybreaks

\title{Interpolation and coherence in multiple testing}

\author{Ninh Tran}
\address{
School of Mathematics and Statistics,
The University of Melbourne, Australia
}
\email{ninht@student.unimelb.edu.au}

\author{Dennis Leung}
\address{
School of Mathematics and Statistics,
The University of Melbourne, Australia
}
\email{dennis.leung@unimelb.edu.au}

\author{Jesse Hemerik}
\address{
Econometric Institute, Erasmus University Rotterdam, the Netherlands
}
\email{hemerik@ese.eur.nl}

\begin{document}

\maketitle

\begin{abstract}
In the literature on  multiple testing methods that control the familywise error rate, a fundamental concept is ``coherence''. A procedure is coherent if it is ``logically consistent'' (Hochberg and Tamhane, 1987) in the sense that if a hypothesis is  rejected, then all hypotheses that imply it are also rejected. If a method is not coherent, then it can be made coherent without increasing the familywise error rate.
Recently, the concept of coherence was generalized to a much broader class of multiple testing procedures, namely all methods that provide simultaneous confidence bounds for false discovery proportions. 
Such procedures were defined to be coherent if they cannot be further improved through a process called ``interpolation''.
However, we show that such a coherent procedure can sometimes be further improved by exploiting remaining logical inconsistencies. This leads to what we call a ``perfectly coherent'' method.
Thus, interpolation does not necessarily lead to perfect coherence. However, we provide conditions under which it does. Fortunately, many methods based on interpolation turn out to be perfectly coherent.
\end{abstract}

\section{Introduction}

When multiple hypotheses are tested, familywise error rate (FWER) control means ensuring that with probability at least $1-\alpha$, no type I errors are committed.
In the literature on FWER control, \emph{coherence}  is a prominent concept \citep{gabriel1969simultaneous, Sonnemann1982, hochberg1987multiple, Grechanovsky1999,Romano2011, lehmann2022testing}. 
A FWER method is called coherent if the following holds: if a hypothesis $\cH$ is rejected, then all considered hypotheses that imply it are also rejected. This is typically considered to be a desirable property, since if $\cH$ is false, then necessarily, all hypotheses that imply it are also false.

More recently, a popular approach to multiple testing has been to provide confidence statements about false discovery proportions. One approach is false discovery exceedance (FDX) control, which means ensuring that with probability at least $1-\alpha$, the proportion of false positives (FDP) does not exceed some fixed small constant. Another framework, which in fact generalizes FWER and FDX control, is to provide simultaneous \emph{true discovery guarantees} (TDG)---also known as simultaneous FDP control. If $\cI$ is the set of indices of all hypotheses and $\cI_1$ is the set of indices of false hypotheses, then a TDG procedure provides a lower bound $d(\cS)$ for the number $|\cS \cap \cI_1|$ for each set $\cS$. These bounds are simultaneous if with probability at least $1-\alpha$, for all $\cS \subseteq \cI$, $d(\cS)\leq |\cS\cap \cI_1|$.

In the words of \citet[][p.44]{hochberg1987multiple}, coherence is a ``logical consistency'' property: if a FWER  procedure is coherent, then purely based on the rejections it has made, no further rejections are implied. 
As pointed out in \citet{Goeman2021}, it is natural to extend the concept of coherence to TDG procedures. For example, if $d(\{1,2,3\})=2$, then we have inferred that at least two hypotheses among $\cH_1$, $\cH_2$ and $\cH_3$ are false. Hence, we  would want $d(\{2,3\})$ and $d(\{2,3,4\})$ to be at least 1, and $d(\{1,2,3,4\})$ to be at least 2 for instance. This type of reasoning is an example of \emph{interpolation} \citep{blanchard2020post}, a technique which is also used in e.g. \citet{van2004augmentation}, \citet{farcomeni2009generalized}, \citet{blain2022notip} and \citet{andreella2023permutation}. Using the concept of interpolation, \citet{Goeman2021} gave the following definition of coherence of TDG procedures: a TDG method is coherent if it cannot be improved by interpolation.

The main message of the present paper, however,  is that if a method satisfies the mentioned definition of coherence, it may still contain logical inconsistencies. These may be exploited to further improve the bounds, leading to what we call a \emph{perfectly coherent} method (\secref{perfect_coherence}), which is closely related to the notion of optimality considered by \citet{blanchard2020post} in the setting of \textit{reference families}. A perfectly coherent method is ``logically consistent'' in the spirit of \citet[][p.44]{hochberg1987multiple}. 
 Our result has consequences for the validity of Corollary 1 of \citet{Goeman2021}, which says that every coherent procedure is a closed testing procedure. That corollary is only valid if we replace ``coherent'' by ``perfectly coherent'' there. All other results in that paper remain correct. We also provide a condition on the structure of the TDG procedure under which interpolation leads to a perfectly coherent method (\secref{TU_equivalence}), extending beyond the settings previously studied by \citet{Durand2020}. Finally, in the Discussion (\secref{discussion}), we show how making methods perfectly coherent can be valuable in meta-analysis, where coherence and perfect coherence may not coincide. 

\section{Interpolation and coherence}\label{sec:coherence}
Let $\Xdata$ denote data generated from an unknown probability distribution $\P\in \cP$. 
For a finite index set $\cI\subseteq\mathbb N$, suppose we wish to test whether $\P\in\cH_i$ for each $i\in\cI$, where $\cH_i\subseteq\cP$ is a hypothesis. Accordingly, let
\begin{equation*}
    \cI_0
    = \{i \in \cI : \P \in \cH_{i}\}
    \quad
    \text{and}
    \quad
    \cI_1
    = \cI \backslash  \cI_0
\end{equation*}
be the index sets of true and false hypotheses, respectively. Let $\bd:2^{\cI} \rightarrow \mathbb{Z}$ 
denote a random set function depending on $\Xdata$. For a fixed significance level $\alpha\in(0,1)$, we call $\bd$ a true discovery guarantee (TDG) procedure if
\begin{equation*}
    \P\left(
        \bd(\cS)
        \leq
        |\cS\cap\cI_1|
        \text{ for all }\cS\subseteq\cI
    \right)
    \geq 1-\alpha \quad \text{for all $\P\in\cP$. }
\end{equation*}
Without loss of generality, we restrict our attention throughout to TDG procedures satisfying $0\leq \bd(\cS)\leq |\cS|$ for all $\cS\subseteq\cI$ and $\bd(\varnothing)=0$, since $0\leq |\cS\cap\cI_1|\leq |\cS|$ and $|\varnothing\cap\cI_1|=0$.

With probability at least $1-\alpha$, the TDG procedure $\bd$ provides a lower bound on the number of true discoveries for all subsets of $\cI$ simultaneously, i.e.,
\begin{equation}\label{constraints}
    \bd(\cS) \leq |\cI_1 \cap \cS|
    \quad \text{for all } \cS \subseteq \cI.
\end{equation}
Through a process called \emph{interpolation} \citep{blanchard2020post,Goeman2021},
$\bd$ can be uniformly improved by exploiting relationships among the constraints \eqref{constraints} imposed by its bounds, while preserving its $(1-\alpha)$ guarantee. The TDG procedure resulting from interpolation, denoted by $\Bar{\bd}$, is defined by
\begin{equation*}
    \Bar{\bd}(\cS) = \max_{\cU \subseteq \cI} \{ \bd(\cU) - |\cU \backslash \cS| + \bd(\cS \backslash \cU) \}
    \quad \text{for all } \cS \subseteq \cI.
\end{equation*}
Since $\cU=\cS$ is one of the values over which the maximum is taken, it follows that
\begin{equation*}
    \Bar{\bd}(\cS)\geq \bd(\cS)
    \quad\text{for all } \cS\subseteq \cI.
\end{equation*}
Intuitively, interpolation considers every $\cU \subseteq \cI$ and takes the largest lower bound on $|\cS \cap \cI_1|$ implied by combining the constraints $\bd(\cU) \leq |\cI_1 \cap \cU|$ and $\bd(\cS \backslash \cU) \leq |\cI_1 \cap (\cS \backslash \cU)|$. The subtraction of $|\cU \backslash \cS|$ accounts for true discoveries in $\cU$ that may lie outside $\cS$.

\citet{Goeman2021} defined $\bd$ to be \emph{coherent} if it cannot be uniformly improved by interpolation; that is,
\begin{equation*}
    \bd(\cS) = \Bar{\bd}(\cS) \quad \text{for all $\cS \subseteq \cI$.}
\end{equation*} 
Interpolation is not necessarily a one-off process, since an interpolated procedure may be interpolated again. Starting from an arbitrary $\bd$, interpolation can therefore be applied repeatedly until no further improvement is possible. Let $\check{\bd}$ denote the resulting procedure. By design, $\check{\bd}$ is coherent and satisfies
\begin{equation*}
    \check{\bd}(\cS)\geq \Bar{\bd}(\cS) \quad \text{for all $\cS\subseteq\cI$,}
\end{equation*}
since $\Bar{\bd}$ is obtained after only one round of interpolation. The procedure $\check{\bd}$ is well defined because repeated interpolation must eventually terminate at a coherent procedure. Indeed, since $\cI$ is finite, each integer-valued bound for $\cS\subseteq\cI$ can increase only finitely many times, as interpolation cannot produce a bound exceeding $|\cS|$.

\section{Perfect coherence}\label{sec:perfect_coherence}
The coherent procedure $\check{\bd}$ is obtained by repeatedly interpolating $\bd$ until no further improvement is possible. A natural question is whether $\check{\bd}$ provides the largest possible bounds implied by the constraints \eqref{constraints} imposed by $\bd$. We show that this is not necessarily the case, and a key takeaway is that all such improvements are exhausted by a \textit{maximization} operation closely related to that considered by \citet{blanchard2020post}. The proofs of the theoretical results below are given in \appref{perfect_coherence}.

The collection of all possible candidates for $\cI_1$ consistent with \eqref{constraints} is defined by
\begin{equation*}
    \cA
    = \cA(\bd) =
    \left\{
    \cJ \subseteq \cI :
    \bd(\cS) \leq |\cJ \cap \cS|
    \text{ for all } \cS \subseteq \cI
    \right\}.
\end{equation*}
Note that $\cI \in \cA$, so $\cA \neq \varnothing$. Define the maximization of $\bd$ to be the random set function $\tilde{\bd}:2^{\cI} \to \mathbb{Z}$ given by
\begin{equation*}
    \tilde{\bd}(\cS) = \min_{\cJ \in \cA} |\cJ \cap \cS| \quad \text{for all $\cS \subseteq \cI$.}
\end{equation*}
\begin{lemma}\label{lem:d_hat_improve}
   The procedure $\tilde{\bd}$ has a $(1-\alpha)$ true discovery guarantee and satisfies $\tilde{\bd}(\cS) \geq \check{\bd}(\cS)$ for all $\cS \subseteq \cI$.
\end{lemma}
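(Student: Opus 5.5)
The proof has two parts, the true discovery guarantee and the domination of $\check{\bd}$; both rest on the fact that $\cA(\bd)$ consists exactly of the candidates for $\cI_1$ allowed by \eqref{constraints}.

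\emph{Guarantee.} Let $E$ be the event that $\bd(\cS)\leq|\cS\cap\cI_1|$ for all $\cS\subseteq\cI$. By assumption $\P(E)\geq 1-\alpha$. On $E$ we have $\cI_1\in\cA$ by the definition of $\cA$. Hence for every $\cS$,
\[
\tilde{\bd}(\cS)=\min_{\cJ\in\cA}|\cJ\cap\cS|\leq|\cI_1\cap\cS|.
\]
So the event on which $\tilde{\bd}$ is valid for all $\cS$ contains $E$, and $\tilde{\bd}$ inherits the $(1-\alpha)$ guarantee. We should also check that $\tilde{\bd}$ satisfies the standing conventions $0\leq\tilde{\bd}(\cS)\leq|\cS|$ and $\tilde{\bd}(\varnothing)=0$; these are immediate.

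\emph{Domination.} The plan is to show that interpolation never removes elements of $\cA$, i.e.\ $\cA(\bd)\subseteq\cA(\Bar{\bd})$. Fix $\cJ\in\cA(\bd)$, $\cS\subseteq\cI$ and $\cU\subseteq\cI$. Using $\bd(\cU)\leq|\cJ\cap\cU|$ and $\bd(\cS\backslash\cU)\leq|\cJ\cap(\cS\backslash\cU)|$, we get
\[
\bd(\cU)-|\cU\backslash\cS|+\bd(\cS\backslash\cU)\leq |\cJ\cap\cU|-|\cU\backslash\cS|+|\cJ\cap(\cS\backslash\cU)|.
\]
Next we split $|\cJ\cap\cU|=|\cJ\cap\cU\cap\cS|+|\cJ\cap(\cU\backslash\cS)|$ and bound $|\cJ\cap(\cU\backslash\cS)|\leq|\cU\backslash\cS|$. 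This shows the right-hand side is at most
\[
|\cJ\cap\cU\cap\cS|+|\cJ\cap(\cS\backslash\cU)|=|\cJ\cap\cS|.
\]
Maximizing over $\cU$ gives $\Bar{\bd}(\cS)\leq|\cJ\cap\cS|$, so $\cJ\in\cA(\Bar{\bd})$.

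Iterating this inclusion along the finite chain of interpolations gives $\cA(\bd)\subseteq\cA(\check{\bd})$. Then for each $\cS$ and each $\cJ\in\cA(\bd)$ we have $\check{\bd}(\cS)\leq|\cJ\cap\cS|$. Taking the minimum over $\cJ\in\cA(\bd)$ yields $\check{\bd}(\cS)\leq\tilde{\bd}(\cS)$.

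The only step needing real care is the counting inequality in the domination part, specifically the decomposition of $\cJ\cap\cU$ across $\cS$. Everything else is definitional, and the iteration step is a simple induction on the number of interpolation rounds.
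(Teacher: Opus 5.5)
Your proposal is correct and matches the paper's proof essentially step for step: the guarantee follows because $\cI_1\in\cA$ on the event where the bounds of $\bd$ hold, and the domination comes from the same counting bound $|\cJ\cap\cU|-|\cU\backslash\cS|+|\cJ\cap(\cS\backslash\cU)|\leq|\cJ\cap\cS|$, propagated by induction over the interpolation rounds. Your invariant $\cA(\bd)\subseteq\cA(\Bar{\bd})$ simply restates the paper's inductive claim that $\bd^{(k)}(\cS)\leq|\cJ\cap\cS|$ for all $\cS\subseteq\cI$ and all $\cJ\in\cA(\bd)$.
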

\lemref{d_hat_improve} states that maximization yields a TDG procedure $\tilde{\bd}$ that dominates the coherent procedure $\check{\bd}$ obtained through repeated interpolation. Intuitively, this is because maximization exhausts all logical improvements by setting $\tilde{\bd}(\cS)$ to be the largest lower bound on $|\cI_1 \cap \cS|$ implied by \eqref{constraints}. Maximization is therefore a one-off operation, since applying it again cannot yield any further improvement.
\begin{proposition}\label{prop:max_is_max}
    Applying maximization to a TDG procedure that has already been maximized leaves it unchanged.
\end{proposition}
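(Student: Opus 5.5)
The plan is to show that $\cA(\tilde{\bd}) = \cA(\bd)$. Once this holds, the maximization of $\tilde{\bd}$ is
\begin{equation*}
    \min_{\cJ \in \cA(\tilde{\bd})} |\cJ \cap \cS| = \min_{\cJ \in \cA(\bd)} |\cJ \cap \cS| = \tilde{\bd}(\cS)
\end{equation*}
for every $\cS \subseteq \cI$, which is exactly the claim. The equality of the two candidate collections will follow from two inclusions, each obtained directly from the definitions. No use of interpolation or of \lemref{d_hat_improve} is needed beyond the remark that $\tilde{\bd}$ dominates $\bd$.

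\textbf{Step 1: $\cA(\tilde{\bd}) \subseteq \cA(\bd)$.} First I will verify that $\tilde{\bd}(\cS) \geq \bd(\cS)$ for all $\cS$. This holds because every $\cJ \in \cA(\bd)$ satisfies $|\cJ \cap \cS| \geq \bd(\cS)$, so the minimum over $\cA(\bd)$, which is nonempty since $\cI \in \cA(\bd)$, is also at least $\bd(\cS)$. This domination can be read off from \lemref{d_hat_improve} together with $\check{\bd} \geq \Bar{\bd} \geq \bd$, or checked directly as above. Consequently, any $\cJ$ with $\tilde{\bd}(\cS) \leq |\cJ \cap \cS|$ for all $\cS$ also satisfies $\bd(\cS) \leq |\cJ \cap \cS|$ for all $\cS$.

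\textbf{Step 2: $\cA(\bd) \subseteq \cA(\tilde{\bd})$.} Fix $\cJ \in \cA(\bd)$ and any $\cS \subseteq \cI$. Since $\cJ$ is one of the sets over which the minimum defining $\tilde{\bd}(\cS)$ is taken, we get $\tilde{\bd}(\cS) \leq |\cJ \cap \cS|$. Hence $\cJ \in \cA(\tilde{\bd})$.

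I do not expect any step to be a real obstacle. The only point requiring care is that the maximization operation is applied to $\tilde{\bd}$ itself. This makes sense because $\tilde{\bd}$ satisfies the standing normalization $0 \leq \tilde{\bd}(\cS) \leq |\cS|$ and $\tilde{\bd}(\varnothing) = 0$. The normalization holds because $\cI \in \cA(\bd)$ gives $\tilde{\bd}(\cS) \leq |\cS|$, and every cardinality is nonnegative. In addition, $\tilde{\bd}$ is a TDG procedure by \lemref{d_hat_improve}.
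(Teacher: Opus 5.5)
Your proof is correct and follows the paper's argument: both reduce the claim to $\cA(\tilde{\bd}) = \cA(\bd)$ and prove the two inclusions directly from the definitions, with $\cA(\tilde{\bd}) \subseteq \cA(\bd)$ resting on $\bd \leq \tilde{\bd}$. The only minor difference is that you verify $\bd \leq \tilde{\bd}$ directly from $\cA(\bd) \neq \varnothing$, whereas the paper writes the chain $\bd \leq \tilde{\bd} \leq \tilde{\bd}^{(2)} \leq |\cJ \cap \cS|$ and does not justify its first inequality there.
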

We call $\bd$ a \emph{perfectly coherent} TDG procedure if maximization cannot improve it further. That is,
\begin{equation*}
\bd(\cS) = \tilde{\bd}(\cS) \quad \text{for all $\cS \subseteq \cI$.}
\end{equation*}
By \propref{max_is_max}, every TDG procedure can be made perfectly coherent by a single application of maximization. Moreover, by \lemref{d_hat_improve}, interpolation cannot improve a perfectly coherent TDG procedure. Hence, every perfectly coherent TDG procedure is also coherent.

Conceptually, perfect coherence is more faithful to the notion of coherence for FWER procedures than to the coherence introduced by \citet{Goeman2021}. In the FWER setting, a procedure is said to be coherent, or ``logically consistent'' \citep[p.44]{hochberg1987multiple}, if no further rejections are implied by those already made. The natural analogue for TDG procedures is that no further improvements to the true discovery bounds are implied by the bounds already obtained, which is the notion of perfect coherence.

In \exref{dI}, we present a TDG procedure $\bd$ for which the corresponding perfectly coherent procedure $\tilde{\bd}$ provides a uniform improvement over the coherent procedure $\check{\bd}$. In \appref{PC}, we show that \exref{dI} is not merely pathological, since the construction of the procedure $\bd$ appearing in the example can be motivated by a partial conjunction test \citep{Benjamini2008}.

\begin{example}\label{ex:dI}
Let $\cI= \{1,2,3\}$ and let $\cI_0 \subseteq \cI$ be arbitrary. Suppose the data $\Xdata$ has distribution $\P$ where
\begin{equation*}
    \P(\Xdata=0)=1-\alpha \quad \text{and} \quad \P(\Xdata=1)=\alpha.
\end{equation*}
Define the TDG procedure
\begin{equation*}
    \bd(\cS)=
    \begin{cases}
        \Xdata & \text{if } |\cS|=2\\
        0 & \text{otherwise}
    \end{cases}
    \quad \text{for all $\cS \subseteq \cI$.}
\end{equation*}
Interpolating $\bd$ once yields the coherent TDG procedure
\begin{equation}\label{check_dI}
    \check{\bd}(\cS) = \Bar{\bd}(\cS)=
    \begin{cases}
        0 & \text{if } |\cS|\leq 1 \\
        \Xdata & \text{if } |\cS|\geq 2
    \end{cases}
    \quad \text{for all $\cS \subseteq \cI$.}
\end{equation}
Maximizing $\bd$ yields the perfectly coherent TDG procedure
\begin{equation}\label{dI_hat}
    \tilde{\bd}(\cS) = 
    \begin{cases}
        0 &\text{if $|\cS| \leq 1$} \\
        \Xdata &\text{if $|\cS| = 2$} \\
        2 \Xdata &\text{if $|\cS| = 3$} 
    \end{cases}
    \quad \text{for all $\cS\subseteq\cI$}.
\end{equation} 
Therefore, $\tilde{\bd}$ uniformly improves upon $\check{\bd}$, since $\tilde{\bd}(\cS)=\check{\bd}(\cS)$ for $|\cS|\leq 2$ and
\begin{equation*}
    \P(\tilde{\bd} (\{1,2,3\}) >  \check{\bd} (\{1,2,3\})) = \alpha.
\end{equation*}
Derivations for $\check{\bd}$ and $\tilde{\bd}$ are provided in \appsref{showing_coherence} and \ref{app:max_dI}, respectively. 
\end{example}

\exref{dI} provides a helpful illustration of why maximization can improve upon interpolation. When $\Xdata = 1$, maximization infers from the three bounds
\begin{equation}\label{3_d}
\bd(\{1,2\})
=
\bd(\{1,3\})
=
\bd(\{2,3\})
=
1
\end{equation}
that the number of true discoveries among $\{1,2,3 \}$ is at least two, thereby yielding $\tilde{\bd}(\{1,2,3\})=2$; see \figref{venn-null-hypotheses} for an illustration. In contrast, interpolation can only combine two lower bounds, $\bd(\cU)$ and $\bd(\cS \backslash \cU)$, at a time when improving the bound for a given $\cS$. Consequently, it cannot exploit the joint logical implications of the three bounds in \eqref{3_d} to improve $\check{\bd}(\{1,2,3\}) = \Bar{\bd}(\{1,2,3\}) =1$.

\begin{figure}[t]
    \centering

    \begin{tikzpicture}[scale=0.8]

    \def\r{2}

    \coordinate (A) at (-1.2,0.6);
    \coordinate (B) at ( 1.2,0.6);
    \coordinate (C) at ( 0,-1.25);

    \draw[thick] (-4,-3.8) rectangle (4,3.2);
    \node[anchor=south east] at (3.8,-3.6) {$\cP$};

    \begin{scope}
        \clip (A) circle (\r);
        \clip (B) circle (\r);
        \fill[red!25] (-4,-4) rectangle (4,4);
    \end{scope}

    \begin{scope}
        \clip (A) circle (\r);
        \clip (C) circle (\r);
        \fill[red!25] (-4,-4) rectangle (4,4);
    \end{scope}

    \begin{scope}
        \clip (B) circle (\r);
        \clip (C) circle (\r);
        \fill[red!25] (-4,-4) rectangle (4,4);
    \end{scope}

    \begin{scope}
        \clip (A) circle (\r);
        \clip (B) circle (\r);
        \clip (C) circle (\r);
        \fill[red!55] (-4,-4) rectangle (4,4);
    \end{scope}

    \draw[thick] (A) circle (\r);
    \draw[thick] (B) circle (\r);
    \draw[thick] (C) circle (\r);

    \node at (-2.45,1.55) {$\cH_1$};
    \node at ( 2.55,1.55) {$\cH_2$};
    \node at ( 0,-2.75) {$\cH_3$};

\end{tikzpicture}

    \caption{Recall that each hypothesis $\cH_1,\cH_2,\cH_3$ corresponds to a set of distributions, as visualized by the Venn diagram. The shaded regions indicate where the true distribution $\P$ cannot lie given the bounds in \eqref{3_d}. If these bounds are correct, then $\P$ can belong to at most one of the three hypotheses. Equivalently, at least two of the three hypotheses must be false.}\label{fig:venn-null-hypotheses}
\end{figure}

We close this section by highlighting an important connection with the work of \citet{blanchard2020post}, who develop procedures for upper bounding false discoveries from a \textit{reference family}. Our maximization operation is the true-discovery analogue of their \textit{optimal bound}, defined in equation (7) of \citet{blanchard2020post}, with both approaches leveraging all available bounds to obtain improved guarantees, but in opposite directions. For nested reference families, \citet[Proposition 2.5]{blanchard2020post} show that their interpolation bound coincides with the optimal bound. \citet[Theorem 3.6]{Durand2020} extend this equivalence to forest-structured reference families, in which any two sets are either nested or disjoint. Our work takes a different perspective by investigating the notion of coherence introduced by \citet{Goeman2021}, where interpolation is applied repeatedly until no further improvement is possible. We show that even after this process has reached a coherent procedure, further improvements may still be implied by its bounds and can be obtained through maximization.

\subsection{When does interpolation lead to perfect coherence?}\label{sec:TU_equivalence}
We now consider sufficient conditions under which applying interpolation until coherence is reached also yields a perfectly coherent procedure. An immediate case occurs when $\bd$ is already perfectly coherent, since interpolation cannot improve it. In the following, we provide a nontrivial condition that covers and extends the nested and forest-structured settings considered by \citet{blanchard2020post} and \citet{Durand2020}. Proofs of the results developed in this section are deferred to \appref{TU_equivalence}.

Define the support family of a TDG procedure $\bd$ to be
\begin{equation*}
    \cF (\bd) =  \{\cR \subseteq \cI:\bd(\cR) > 0\} = \{\cR_1,\dots,\cR_n\},
\end{equation*}
where $n=|\cF (\bd)|$. Let $\mathbf{M}\in\{0,1\}^{n\times|\cI|}$ denote an \textit{incidence} matrix for $\bd$ where the $(k,i)$th entry of $\mathbf{M}$ is
\begin{equation*}
    \mathbf{M}_{ki} = \mathbbm{1}\{i\in\cR_k\} \quad \text{for each $k\in\{1,\ldots,n\}$ and $i\in\cI$.}
\end{equation*}
Therefore, each row of $\mathbf{M}$ corresponds to a set in the support of $\bd$, while each column corresponds to a hypothesis. The following theorem gives a sufficient condition on $\mathbf{M}$ under which applying interpolation until coherence is reached also yields a perfectly coherent procedure, so that $\check{\bd}=\tilde{\bd}$.

\begin{theorem}\label{thm:TU_equivalence}
For a given TDG procedure $\bd$, suppose its incidence matrix $\mathbf{M}$ is totally unimodular almost surely. That is, every square submatrix of $\mathbf{M}$ has determinant in $\{-1,0,1\}$. Then
\begin{equation*}
    \check{\bd}(\cS) = \tilde{\bd}(\cS) \quad \text{for all $\cS \subseteq \cI$ almost surely.}
\end{equation*}
\end{theorem}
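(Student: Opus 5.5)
Throughout, fix a realization of $\Xdata$ for which $\mathbf{M}$ is totally unimodular. By \lemref{d_hat_improve}, $\tilde{\bd}\geq\check{\bd}$, so it suffices to show $\check{\bd}(\cS)\geq\tilde{\bd}(\cS)$ for each fixed $\cS$. The plan is to write $\tilde{\bd}(\cS)$ as an integer program, use total unimodularity and LP duality to get an explicit integral dual certificate, and then show that repeated interpolation reproduces that certificate.

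\textbf{Step 1: the integer program.} Encode $\cJ$ by its indicator $x\in\{0,1\}^{\cI}$ and set $b_k=\bd(\cR_k)$. Constraints from sets with $\bd=0$ are vacuous, so
\begin{equation*}
\tilde{\bd}(\cS)=\min\{\mathbbm{1}_\cS^\top x:\ \mathbf{M}x\geq b,\ 0\leq x\leq 1,\ x\in\mathbb{Z}^{\cI}\}.
\end{equation*}
Since $\mathbf{M}$ is totally unimodular, so is the stacked matrix $[\mathbf{M};-\mathbf{I}]$. Together with integral $b$ and bounds, this makes the LP relaxation's polytope integral, so its optimum equals $\tilde{\bd}(\cS)$. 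By LP duality, and because the dual constraint matrix $[\mathbf{M}^\top,\,-\mathbf{I}]$ is also totally unimodular with integral right-hand side $\mathbbm{1}_\cS$, there are integers $y\geq 0$ and $z\geq 0$ with $\mathbf{M}^\top y-z\leq\mathbbm{1}_\cS$ and
\begin{equation*}
\tilde{\bd}(\cS)=\textstyle\sum_k y_k b_k-\sum_i z_i .
\end{equation*}
I expect this step, with its bookkeeping of integrality of both primal and dual, to be the main technical point.

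\textbf{Step 2: reduce to $y\in\{0,1\}^n$.} Suppose $y_k\geq 2$. Every $i\in\cR_k$ is then covered at least twice, while $\mathbbm{1}_\cS(i)\leq 1$, so $z_i\geq 1$ for all $i\in\cR_k$. Lowering $y_k$ by one and each $z_i$, $i\in\cR_k$, by one keeps feasibility. It changes the objective by $|\cR_k|-b_k\geq 0$, since $\bd(\cR)\leq|\cR|$. Iterating gives an optimal $y\in\{0,1\}^n$, which selects sets $\cR_1,\dots,\cR_m$ (after relabeling). For $i\in\cI$ let $c(i)=|\{k\leq m: i\in\cR_k\}|$; optimally $z_i=(c(i)-\mathbbm{1}_\cS(i))_+$. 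Writing $\cW=\bigcup_{k\leq m}\cR_k$, we get
\begin{equation*}
\tilde{\bd}(\cS)=\textstyle\sum_{k\leq m}b_k-\sum_{i\in\cW}(c(i)-1)-|\cW\backslash\cS|.
\end{equation*}

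\textbf{Step 3: reproduce the bound by coherence.} Since $\check{\bd}$ is coherent, $\check{\bd}(\cS')\geq\check{\bd}(\cU)-|\cU\backslash\cS'|+\check{\bd}(\cS'\backslash\cU)$ for all $\cS',\cU$, and also $\check{\bd}\geq\bd\geq 0$ with $\check{\bd}(\varnothing)=0$. Let $\cW_j=\cR_1\cup\dots\cup\cR_j$. Applying coherence with $\cS'=\cW_j$ and $\cU=\cR_j$ gives $\check{\bd}(\cW_j)\geq b_j+\check{\bd}(\cW_{j-1}\backslash\cR_j)$. Applying it with $\cS'=\cW_{j-1}\backslash\cR_j$ and $\cU=\cW_{j-1}$ gives $\check{\bd}(\cW_{j-1}\backslash\cR_j)\geq\check{\bd}(\cW_{j-1})-|\cW_{j-1}\cap\cR_j|$. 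By induction,
\begin{equation*}
\check{\bd}(\cW)\geq\textstyle\sum_k b_k-\sum_j|\cW_{j-1}\cap\cR_j|=\sum_k b_k-\sum_{i\in\cW}(c(i)-1).
\end{equation*}
A final application with $\cU=\cW$ yields $\check{\bd}(\cS)\geq\check{\bd}(\cW)-|\cW\backslash\cS|$. Comparing with the expression at the end of Step 2, this is exactly $\check{\bd}(\cS)\geq\tilde{\bd}(\cS)$.
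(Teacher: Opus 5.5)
Your proof is correct and follows the same route as the paper: total unimodularity makes both the primal covering LP and its dual integral, and repeated use of the interpolation inequality for the coherent $\check{\bd}$ then reproduces the integral dual certificate, giving $\check{\bd}(\cS)\geq\tilde{\bd}(\cS)$. The differences are only bookkeeping. The paper fixes $\cJ\supseteq\cI\backslash\cS$ so the program lives on $\cS$ with right-hand sides $\bd(\cR_k)-|\cR_k\backslash\cS|$, and it keeps general integer multipliers $y_k^\star$, handling them by induction on $\sum_k a_k$ in \lemref{coherent_aggregation}. Your exchange argument reduces $y$ to $\{0,1\}^n$ using $\bd(\cR_k)\leq|\cR_k|$, which lets a simple chain over the distinct sets $\cR_1,\dots,\cR_m$, plus one final step with $\cU=\cW$, do the same job.
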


Following \citet{Durand2020}, we say that a support family is forest-structured if any two of its sets are either nested or disjoint; equivalently,
\begin{equation*}
    \cR\cap\cR'
    \in
    \{\cR,\cR',\varnothing\}
    \quad
    \text{for all $\cR,\cR'\in\cF(\bd)$}.
\end{equation*}
Such families have totally unimodular incidence matrices (\propref{forest_TU}), so \thmref{TU_equivalence} extends the forest-structured setting of \citet[Theorem 3.6]{Durand2020}, where their interpolation bound is shown to coincide with the optimal bound. Nested support families, as studied by \citet{blanchard2020post}, are a special case of forest structure, where
\begin{equation*}
    \cR_1\subsetneq \cR_2\subsetneq \cdots \subsetneq \cR_n.
\end{equation*}
These families arise naturally from procedures that threshold $p$-values or other test statistics, since the resulting rejection sets are nested as the threshold varies. Examples include the simultaneous FDP bounds of \citet{Hemerik2019} and the knockoff-based interpolation procedures of \citet{Li2024} and \citet{katsevich2020simultaneous}.
It should be noted that total unimodularity is more general than forest structure, allowing support families with more complex patterns of overlap. For example, if $\cI=\{1,2,3,4\}$ and
\begin{equation*}
    \cF(\bd)
    =
    \bigl\{ \{1,2\}, \{2,3\}, \{3,4\}
    \bigr\},
\end{equation*}
then the corresponding incidence matrix is totally unimodular, even though $\cF(\bd)$ is not forest-structured. 

\begin{proposition}\label{prop:forest_TU}
If the support family $\cF(\bd)$ is forest-structured, then the corresponding incidence matrix $\mathbf{M}$ is totally unimodular.
\end{proposition}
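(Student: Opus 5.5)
The plan is to use the classical characterization of total unimodularity due to Ghouila-Houri. A $\{0,1\}$ (or more generally integer) matrix $\mathbf{M}$ is totally unimodular if and only if every subset $\cW$ of its \emph{rows} admits a signing $\sigma:\cW\to\{-1,+1\}$ such that, for every column $i$,
\begin{equation*}
\sum_{k\in\cW}\sigma_k \mathbf{M}_{ki}\in\{-1,0,1\}.
\end{equation*}
Since rows of $\mathbf{M}$ correspond to the sets $\cR_k\in\cF(\bd)$, this condition says the following: every subfamily $\{\cR_k:k\in\cW\}$ can be split into two classes, $+$ and $-$, so that each hypothesis $i$ lies in at most one more set of one class than of the other. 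Total unimodularity is a property of a fixed matrix, and forest structure holds for each realization of $\bd$. The argument therefore applies realization by realization, and randomness plays no role.

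Fix a subfamily $\cW$. It is again forest-structured, because the nested-or-disjoint condition is inherited by subfamilies. I will organize $\cW$ as a rooted forest under inclusion. Since the sets are distinct, take the parent of $\cR_k$ to be the inclusion-minimal set in $\cW$ strictly containing it. This parent is unique, because any two sets containing $\cR_k\neq\varnothing$ intersect and hence are nested. The empty set cannot occur, since $\bd(\varnothing)=0$ means $\varnothing\notin\cF(\bd)$. Define the depth of $\cR_k$ as the number of sets in $\cW$ strictly containing it, and set $\sigma_k=(-1)^{\text{depth}}$.

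Now fix a column $i$. The sets in $\cW$ containing $i$ pairwise intersect, so by forest structure they are totally ordered by inclusion, forming a chain $\cR_{k_1}\supsetneq\cR_{k_2}\supsetneq\cdots\supsetneq\cR_{k_m}$. The key step is to check that consecutive members of this chain have consecutive depths, i.e.\ that the depth of $\cR_{k_j}$ is exactly $j-1$. Any set strictly containing $\cR_{k_j}$ contains $i$, so it lies in the chain above position $j$. Conversely, every chain element above position $j$ strictly contains $\cR_{k_j}$. So the depth equals $j-1$, and the signs along the chain alternate $+,-,+,\dots$. The column sum is therefore an alternating sum of ones, which equals $0$ or $1$. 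This verifies Ghouila-Houri's condition and gives total unimodularity.

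The only real obstacle is that the argument depends on quoting Ghouila-Houri's theorem. If a self-contained proof is preferred, I would instead argue by induction on the size of a square submatrix $\mathbf{B}$. Choose a row corresponding to a minimal set among those indexing $\mathbf{B}$, and subtract it from the row of its parent within $\mathbf{B}$. Row operations of this kind preserve the determinant. The forest structure guarantees that the result is still a $\{0,1\}$ matrix of a forest family, now with that minimal set disjoint from the rest, so after repeating, the rows can be made to have disjoint supports. A $\{0,1\}$ matrix with disjoint row supports visibly has determinant in $\{-1,0,1\}$: each column has at most one $1$, so the matrix is either singular or a permutation matrix. I would present the Ghouila-Houri version as the main proof, since it is shorter.
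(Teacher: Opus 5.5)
Your proof is correct and matches the paper's argument: the paper also applies the Ghouila--Houri criterion with signs $\varepsilon_k=(-1)^{h_k}$, where $h_k$ counts the selected sets strictly containing $\cR_k$, and uses that the sets containing a fixed $i$ form a chain along which the signs alternate, giving column sums in $\{0,1\}$. Your justification that the $j$th chain element has exactly $j-1$ strict supersets in $\cW$ (any strict superset must contain $i$) spells out a step the paper only asserts, and the parent structure and determinant-based alternative are not needed.
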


For a fixed discovery set $\cS$, a single interpolation update requires scanning the current support family, whereas direct maximization may require searching over the power set of $\cI$. Hence, individual interpolation updates can be substantially less computationally intensive. It may therefore be worthwhile to first check whether the incidence matrix of the support family is totally unimodular, which can be done in polynomial time using the algorithm of \citet{Truemper1990}. 
This condition may be particularly useful in settings such as the framework of \citet{Xu2024}, where a TDG procedure is built from prespecified \emph{focus sets}, which play a role analogous to the support family considered here. Their procedure repeatedly applies interpolation until coherence is reached, while allowing the focus sets to have an unrestricted, potentially overlapping structure. Our result therefore provides a sufficient condition under which this process exhausts the information in the initial bounds. In \secref{discussion}, we give a related meta-analysis example with a non-totally-unimodular support structure for which maximization yields strictly stronger bounds than repeated interpolation. 

\subsection{Closed testing procedures are perfectly coherent}\label{sec:closure}
Closed testing provides an important class of TDG procedures that, as we show, are automatically perfectly coherent. To establish this connection, we first recall the closed testing framework from Section 4 of \citet{Goeman2021}. For any finite subset $\cU \subseteq \cI$, let 
\begin{equation*}
\cH_{\cU} = \bigcap_{i \in \cU} \cH_{i}
\end{equation*}
denote an intersection hypothesis so that $\P \in \cH_{\cU}$ if and only if $\P \in \cH_i$ for every $i \in \cU$. Let $\phi_{\cU} \in \{0,1\}$ denote a \textit{local test} for $\cH_{\cU}$, where $\cH_{\cU}$ is rejected if $\phi_{\cU} = 1$ and not rejected if $\phi_{\cU} = 0$. Following \citet{Goeman2021}, we set $\phi_{\varnothing} = 0$ surely. The collection $\varphi = (\phi_{\cU})_{\cU \subseteq \cI}$ is called a \textit{suite} of local tests. A local test $\phi_{\cU}$ is said to be \textit{valid} if 
\begin{equation*}
    \P ( \phi_{\cU} = 1) \leq \alpha \quad \text{for all $\P \in \cH_{\cU}$.}
\end{equation*}
Let the \textit{effective} local test for $\cH_{\cV}$, where $\cV \subseteq \cI$, be
\begin{equation*}
    \phi^{\cI}_{\cV} = \min \{ \phi_{\cU} : \cV \subseteq \cU \subseteq \cI \},
\end{equation*}
which rejects $\cH_{\cV}$ if and only if $\phi_{\cU}=1$ for all $\cV \subseteq \cU \subseteq \cI$. The closed testing TDG procedure induced by a suite of valid local tests $\varphi$ is defined as
\begin{equation}\label{closed_test}
    \bd_{\varphi}(\cS) = \min_{\cU \subseteq \cS} \{ |\cS\backslash \cU|: \phi^{\cI}_{\cU} = 0 \} \quad \text{for all $\cS \subseteq \cI$}.
\end{equation}

Heuristically, closed testing procedures are perfectly coherent because the bound for each set $\cS$ already incorporates information from the effective tests of all its subsets $\cU\subseteq\cS$, each of which depends on the local tests $\phi_{\cV}$ for all $\cU\subseteq\cV\subseteq\cI$. In this sense, closed testing constructs bounds by drawing information jointly across all subsets of $\cI$.
\thmref{iff_perfect_coherence} formalizes this intuition and establishes the stronger result that a TDG procedure is perfectly coherent if and only if it is a closed testing procedure.

\begin{theorem}\label{thm:iff_perfect_coherence}
A TDG procedure is perfectly coherent if and only if it can be represented as a closed testing procedure induced by a suite of valid local tests.
\end{theorem}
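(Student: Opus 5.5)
We prove the two directions separately, using the characterization of closed testing bounds through the family of ``non-rejected'' index sets. The key structural fact is that for a suite $\varphi$ with effective tests $\phi^{\cI}_{\cU}$, the closed testing bound can be rewritten as
\begin{equation*}
    \bd_{\varphi}(\cS) = \min\{ |\cS \backslash \cK| : \cK\subseteq\cI,\ \phi^{\cI}_{\cK}=0\} = \min_{\cJ \in \cA_\varphi} |\cJ\cap\cS|,
\end{equation*}
where $\cA_\varphi=\{\cJ\subseteq\cI : \phi_{\cI\backslash\cJ}=0\}$. To see the first equality, note that $\phi^{\cI}_{\cU}$ is nonincreasing in $\cU$ along supersets. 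Hence a non-rejected $\cK$ gives the non-rejected subset $\cU=\cK\cap\cS$, and $|\cS\backslash(\cK\cap\cS)|=|\cS\backslash\cK|$. For the second equality, $\phi^{\cI}_{\cK}=0$ iff some $\cV\supseteq\cK$ has $\phi_{\cV}=0$, and enlarging $\cK$ to $\cV$ only decreases $|\cS\backslash\cK|$. Writing $\cJ=\cI\backslash\cV$ then gives the stated form. Since $\phi_{\varnothing}=0$, we have $\cI\in\cA_\varphi$, so the minimum is over a nonempty family.

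\emph{Closed testing $\Rightarrow$ perfectly coherent.} Let $\bd=\bd_\varphi$. First we show $\cA_\varphi\subseteq\cA(\bd)$: for $\cJ\in\cA_\varphi$ and any $\cS$, the formula above gives $\bd(\cS)\le|\cJ\cap\cS|$. Consequently
\begin{equation*}
    \tilde{\bd}(\cS)=\min_{\cJ\in\cA(\bd)}|\cJ\cap\cS|\le\min_{\cJ\in\cA_\varphi}|\cJ\cap\cS|=\bd(\cS).
\end{equation*}
For the reverse inequality, any $\cJ\in\cA(\bd)$ satisfies $\bd(\cS)\le|\cJ\cap\cS|$ for every $\cS$ by definition, so $\tilde{\bd}\ge\bd$. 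Thus $\tilde{\bd}=\bd$.

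\emph{Perfectly coherent $\Rightarrow$ closed testing.} Suppose $\bd=\tilde{\bd}$. Define the local tests $\phi_{\cU}=\mathbbm{1}\{\cI\backslash\cU\notin\cA(\bd)\}$. Since $\cI\in\cA(\bd)$, this gives $\phi_{\varnothing}=0$ as required. The remaining work is to check that $\bd_\varphi=\bd$ and that each $\phi_\cU$ is valid.

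For $\bd_\varphi=\bd$: with this choice, $\cA_\varphi=\{\cJ:\phi_{\cI\backslash\cJ}=0\}=\cA(\bd)$, so the formula above gives $\bd_\varphi(\cS)=\min_{\cJ\in\cA(\bd)}|\cJ\cap\cS|=\tilde{\bd}(\cS)=\bd(\cS)$.

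Validity is the step needing the most care, though it is short. Fix $\P\in\cH_{\cU}$, so that $\cU\subseteq\cI_0$, i.e. $\cI_1\subseteq\cI\backslash\cU$. Suppose the event $E=\{\bd(\cS)\le|\cS\cap\cI_1|\ \forall\cS\}$ occurs. On $E$ we have $\cI_1\in\cA(\bd)$. Now $\cA(\bd)$ is upward closed, because $|\cJ\cap\cS|$ is monotone in $\cJ$; hence $\cI\backslash\cU\in\cA(\bd)$ and $\phi_\cU=0$. Therefore $\P(\phi_\cU=1)\le\P(E^c)\le\alpha$, using the TDG guarantee of $\bd$.

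The only subtlety is measurability: $\phi_\cU$ is a function of $\bd$, hence of $\Xdata$, so it is a legitimate test. The main obstacle overall is the reformulation of $\bd_\varphi$ as a minimum over $\cA_\varphi$. It relies on the monotonicity of effective tests and on handling the complement correspondence carefully; once it is established, both directions reduce to comparing the families $\cA_\varphi$ and $\cA(\bd)$.
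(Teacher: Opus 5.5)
Your proof is correct, but it follows a different route from the paper's. The paper never rewrites $\bd_\varphi$ as a minimum over a family of candidate sets. Instead it proves two auxiliary facts, each separately for perfectly coherent and for closed testing procedures: zeros of $\bd$ propagate to subsets (\lemref{downward}), and $\bd(\cS)=\min_{\cU\subseteq\cS}\{|\cS\backslash\cU|:\bd(\cU)=0\}$ (\lemref{cond}). It then uses the local tests $\phi_{\cS}=\mathbbm{1}\{\bd(\cS)>0\}$, whose validity is immediate from the TDG guarantee. For the converse it builds an explicit witness $\cJ^\star=\cI\backslash\cU^\star\in\cA$ attaining $\bd(\cS)$.

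Your single identity $\bd_\varphi(\cS)=\min_{\cJ\in\cA_\varphi}|\cJ\cap\cS|$ replaces both lemma pairs, so each direction becomes a comparison between $\cA_\varphi$ and $\cA(\bd)$. It also gives something stronger. Your tests $\phi_{\cU}=\mathbbm{1}\{\cI\backslash\cU\notin\cA(\bd)\}$ are valid for every TDG procedure $\bd$, and they yield $\bd_\varphi=\tilde{\bd}$ in general. Hence the maximization of any TDG procedure is a closed testing procedure, and perfect coherence enters only through the final step $\tilde{\bd}=\bd$.

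Under perfect coherence your tests coincide with the paper's: $\bd(\cU)=\tilde{\bd}(\cU)=0$ iff some $\cJ\in\cA(\bd)$ avoids $\cU$, which by upward closure holds iff $\cI\backslash\cU\in\cA(\bd)$. The paper's route, in exchange, makes explicit structural properties shared by both classes: the zero set is downward closed and determines $\bd$.

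One wording slip: $\phi^{\cI}_{\cU}$ is nondecreasing in $\cU$ under inclusion, so non-rejection passes from a set to its subsets. That is how you actually use it, so the argument is unaffected.
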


The proof of \thmref{iff_perfect_coherence} is provided in \appref{closure}. Since the TDG procedure in \eqref{check_dI} of \exref{dI} is coherent but not perfectly coherent, \thmref{iff_perfect_coherence} implies that \eqref{check_dI} cannot be represented as a closed testing procedure. Consequently, Corollary 1 of \citet{Goeman2021} is incorrect, as it states that every coherent TDG procedure can be represented as a closed testing procedure. This does not, however, affect the validity of the other results in \citet{Goeman2021}. 

Aside from their Corollary 1, our result can be viewed as complementary to the work of \citet{Goeman2021}. For instance, Theorem 1 of \citet{Goeman2021} implies that every admissible TDG procedure is equivalent to a closed testing procedure. This aligns naturally with \thmref{iff_perfect_coherence}, since any admissible procedure must be perfectly coherent; otherwise, it could be uniformly improved by maximization. Since every perfectly coherent procedure is coherent, \thmref{iff_perfect_coherence} also recovers Lemma 4 of \citet{Goeman2021}, which states that closed testing procedures are coherent.

\section{Discussion}\label{sec:discussion}
We conclude by noting that there are other applications of our theory, besides multiple testing based on a single dataset. Namely, we highlight an application of maximization to meta-analysis.
To illustrate, let $\cI=\{1,2,3,4\}$ denote the index set of hypotheses. Consider four studies, each reporting a $0.95$ confidence lower bound on the number of true discoveries  within a subset of the same hypothesis family. Suppose these subsets are $\{1\}$, $\{2,3\}$, $\{1,2,4\}$, and $\{1,3,4\}$, respectively. Such confidence bounds could be obtained in several ways, for example by closed testing \citep{Goeman2011,Goeman2021}, or the resampling-based approach of \citet{Meinshausen2005}.
Suppose the reported bounds are
\begin{equation*}
\bd(\{1    \})=1, \quad
\bd(\{2,3  \})=1, \quad
\bd(\{1,2,4\})=2, \quad
\bd(\{1,3,4\})=2.
\end{equation*}

The particular pattern of overlap among the sets in the support family yields an incidence matrix that is not totally unimodular.
By Boole's inequality, 
the four bounds hold simultaneously with confidence at least
\begin{equation*}
    1-4(1-0.95)=0.80.
\end{equation*}
Therefore, $\bd$ defines a $0.80$ TDG procedure, with $\bd(\cS)=0$ for all other $\cS\subseteq\cI$. Applying maximization and, separately, interpolation, it can be verified that
\begin{equation*}
    2=\tilde{\bd}(\{2,3,4\})>\check{\bd}(\{2,3,4\})=1
    \quad\text{and}\quad
    3=\tilde{\bd}(\cI)>\check{\bd}(\cI)=2,
\end{equation*}
while $\tilde{\bd}(\cS)=\check{\bd}(\cS)$ for all other $\cS\subseteq\cI$. Hence, maximization allows the four study-specific confidence bounds to be combined to obtain simultaneous lower bounds for other subsets of hypotheses, while retaining confidence at least $0.80$. Moreover, two of the resulting bounds are strictly stronger than those obtained by interpolation until coherence. 



\bibliographystyle{plainnat}   
\bibliography{references}

\appendix

\section{Results relating to \exref{dI}}\label{app:dI}

\subsection{A partial conjunction testing interpretation of $\bd$ in \exref{dI}}\label{app:PC}
We show here that the construction of $\bd$ in \exref{dI} is motivated by a partial conjunction (PC) test. Let $\cI=\{1,2,3\}$, and let $P_1,P_2,P_3$ be valid $p$-values for testing $\cH_1,\cH_2,\cH_3$, respectively. 

For each pair $\{i,j\}\subseteq\cI$, a valid $p$-value for testing the intersection hypothesis $\cH_i\cap\cH_j$ is given by
\begin{equation*}
    P_{ij}
    =
    \min\{1,2\min(P_i,P_j)\}.
\end{equation*} 
Now consider the PC hypothesis
\begin{equation*}
    \cH^{\mathrm{PC}}
    =
    (\cH_1\cap\cH_2)
    \cup
    (\cH_1\cap\cH_3)
    \cup
    (\cH_2\cap\cH_3).
\end{equation*}
If $\cH^{\mathrm{PC}}$ is false, then every intersection hypothesis pair contains at least one false null. By equation (3) of \citet{Benjamini2008}, a valid $p$-value for testing $\cH^{\mathrm{PC}}$ is
\begin{equation*}
    P^{\mathrm{PC}}
    =
    \max_{ \{i,j\}\subseteq\cI } P_{ij}.
\end{equation*}
Define the test statistic
\begin{equation}\label{X}
    \Xdata
    =
    \mathbbm{1}\left\{
        P^{\mathrm{PC}}\leq\alpha
    \right\}
\end{equation}
where $\Xdata=1$ corresponds to rejecting $\cH^{\mathrm{PC}}$ at significance level $\alpha$. It follows that
\begin{equation}\label{PC_validity}
    \P(\Xdata=1)
    =
    \P(P^{\mathrm{PC}}\leq\alpha)
    \leq\alpha
    \quad
    \text{for all $\P\in\cH^{\mathrm{PC}}$.}
\end{equation}

Based on the test statistic $\Xdata$ defined in \eqref{X}, define the procedure
\begin{equation*}
    \bd(\cS)
    =
    \begin{cases}
        \Xdata & \text{if } |\cS|=2\\
        0 & \text{otherwise}
    \end{cases}
    \quad \text{for all $\cS\subseteq\cI$},
\end{equation*}
which is the form given in \exref{dI}. To verify its $(1-\alpha)$ guarantee, define the failure event
\begin{equation*}
    \cE
    =
    \left\{
        \exists\,\cS\subseteq\cI
        \text{ such that }
        \bd(\cS)>|\cS\cap\cI_1|
    \right\}.
\end{equation*}
If $\P\notin\cH^{\mathrm{PC}}$, then every $\cS$ with $|\cS|=2$ contains at least one false null. In this case, $\bd(\cS)\leq|\cS\cap\cI_1|$ for every $\cS\subseteq\cI$, and therefore $\P(\cE)=0$. On the other hand, if $\P\in\cH^{\mathrm{PC}}$, then $\cE$ can occur only when $\Xdata=1$. Hence, by \eqref{PC_validity}, we have that
\begin{equation*}
    \P(\cE)
    \leq
    \P(\Xdata=1)
    \leq
    \alpha.
\end{equation*}

Therefore, it holds for any $\P$ that
\begin{equation*}
    \P\left(
        \bd(\cS)\leq|\cS\cap\cI_1|
        \text{ for all $\cS\subseteq\cI$}
    \right)
    =
    1-\P(\cE)
    \geq
    1-\alpha,
\end{equation*}
so $\bd$ has a $(1-\alpha)$ guarantee.

\subsection{Coherence of $\check{\bd}$ in \exref{dI}}\label{app:showing_coherence}
We first determine the interpolation of $\bd$. On the event $\{\Xdata=0\}$, $\bd(\cW) = 0$ for all $\cW \subseteq \cI$, and therefore
\begin{equation*}
    \Bar{\bd}(\cS)
    =
    \max_{\cU\subseteq\cI}
    \left\{
        -|\cU\backslash\cS|
    \right\}
    =
    0  \quad \text{for all $\cS\subseteq\cI$}.
\end{equation*}
On the event $\{\Xdata=1\}$, \tabref{dI_interpolation} enumerates
$\bd(\cU) - |\cU\backslash\cS| + \bd(\cS\backslash\cU)$
for all $\cS,\cU\subseteq\cI$. It follows that
\begin{equation*}
    \Bar{\bd}(\cS)
    =
    \max_{\cU\subseteq\cI}
    \left\{
        \bd(\cU)
        -
        |\cU\backslash\cS|
        +
        \bd(\cS\backslash\cU)
    \right\}
    =
    \begin{cases}
        0 & \text{if } |\cS|\leq 1\\
        1 & \text{if } |\cS|\geq 2
    \end{cases}.
\end{equation*}
Combining the events $\{\Xdata=0\}$ and $\{\Xdata=1\}$ yields the interpolated procedure
\begin{equation*}
    \Bar{\bd}(\cS)
    =
    \begin{cases}
        0 & \text{if } |\cS|\leq 1\\
        \Xdata & \text{if } |\cS|\geq 2
    \end{cases}
    \quad \text{for all $\cS\subseteq\cI$}.
\end{equation*}

Let $\Bar{\bd}^{(2)}$ denote the procedure resulting from interpolating $\bar{\bd}$. 
On the event $\{\Xdata=0\}$, $\Bar{\bd}(\cW) = 0$ for all $\cW \subseteq \cI$, and therefore 
\begin{equation*}
    \Bar{\bd}^{(2)}(\cS) =    
    \max_{\cU\subseteq\cI}
    \left\{
        -|\cU\backslash\cS|
    \right\} = 0 \quad \text{for all $\cS\subseteq\cI$}.
\end{equation*}
On the event $\{\Xdata=1\}$, \tabref{dI_second_interpolation} enumerates
$\Bar{\bd}(\cU) - |\cU\backslash\cS| + \Bar{\bd}(\cS\backslash\cU)$
for all $\cS,\cU\subseteq\cI$. Consequently,
\begin{equation*}
    \Bar{\bd}^{(2)}(\cS)
    =
    \max_{\cU\subseteq\cI}
    \left\{
        \Bar{\bd}(\cU)
        -
        |\cU\backslash\cS|
        +
        \Bar{\bd}(\cS\backslash\cU)
    \right\}
    =
    \begin{cases}
        0 & \text{if } |\cS|\leq 1 \\
        1 & \text{if } |\cS|\geq 2
    \end{cases}
    \quad \text{for all $\cS\subseteq\cI$}.
\end{equation*}
Combining the events $\{\Xdata=0\}$ and $\{\Xdata=1\}$ yields the double interpolated procedure
\begin{equation*}
    \Bar{\bd}^{(2)}(\cS)
    =
    \begin{cases}
        0 & \text{if } |\cS|\leq 1\\
        \Xdata & \text{if } |\cS|\geq 2
    \end{cases}
    \quad \text{for all $\cS\subseteq\cI$}.
\end{equation*} Thus $\Bar{\bd}^{(2)}=\Bar{\bd}$, so $\Bar{\bd}$ is coherent and $\check{\bd}=\Bar{\bd}$.
\vspace{1em}

\begin{table}[ht]
\centering
\scriptsize
\setlength{\tabcolsep}{4pt}
\renewcommand{\arraystretch}{1.25}
\begin{tabular}{c|cccccccc}
\diagbox{$\cS$}{$\cU$}
& $\varnothing$
& $\{1\}$
& $\{2\}$
& $\{3\}$
& $\{1,2\}$
& $\{1,3\}$
& $\{2,3\}$
& $\{1,2,3\}$ \\
\hline
$\varnothing$
& $0$ & $-1$ & $-1$ & $-1$ & $-1$ & $-1$ & $-1$ & $-3$ \\

$\{1\}$
& $0$ & $0$ & $-1$ & $-1$
& $0$ & $0$ & $-1$ & $-2$ \\

$\{2\}$
& $0$ & $-1$ & $0$ & $-1$
& $0$ & $-1$ & $0$ & $-2$ \\

$\{3\}$
& $0$ & $-1$ & $-1$ & $0$
& $-1$ & $0$ & $0$ & $-2$ \\

$\{1,2\}$
& $1$ & $0$ & $0$ & $0$
& $1$ & $0$ & $0$ & $-1$ \\

$\{1,3\}$
& $1$ & $0$ & $0$ & $0$
& $0$ & $1$ & $0$ & $-1$ \\

$\{2,3\}$
& $1$ & $0$ & $0$ & $0$
& $0$ & $0$ & $1$ & $-1$ \\

$\{1,2,3\}$
& $0$ & $1$ & $1$ & $1$
& $1$ & $1$ & $1$ & $0$
\end{tabular}
\vspace{0.5em}
\caption{Values of
$\bd(\cU)-|\cU\backslash\cS|+\bd(\cS\backslash\cU)$
on the event $\{\Xdata=1\}$ for all $\cS,\cU\subseteq\cI$.}
\label{tab:dI_interpolation}
\end{table}

\begin{table}[ht]
\centering
\scriptsize
\setlength{\tabcolsep}{4pt}
\renewcommand{\arraystretch}{1.25}
\begin{tabular}{c|cccccccc}
\diagbox{$\cS$}{$\cU$}
& $\varnothing$
& $\{1\}$
& $\{2\}$
& $\{3\}$
& $\{1,2\}$
& $\{1,3\}$
& $\{2,3\}$
& $\{1,2,3\}$ \\
\hline
$\varnothing$
& $0$ & $-1$ & $-1$ & $-1$
& $-1$ & $-1$ & $-1$ & $-2$ \\

$\{1\}$
& $0$ & $0$ & $-1$ & $-1$
& $0$ & $0$ & $-1$ & $-1$ \\

$\{2\}$
& $0$ & $-1$ & $0$ & $-1$
& $0$ & $-1$ & $0$ & $-1$ \\

$\{3\}$
& $0$ & $-1$ & $-1$ & $0$
& $-1$ & $0$ & $0$ & $-1$ \\

$\{1,2\}$
& $1$ & $0$ & $0$ & $0$
& $1$ & $0$ & $0$ & $0$ \\

$\{1,3\}$
& $1$ & $0$ & $0$ & $0$
& $0$ & $1$ & $0$ & $0$ \\

$\{2,3\}$
& $1$ & $0$ & $0$ & $0$
& $0$ & $0$ & $1$ & $0$ \\

$\{1,2,3\}$
& $1$ & $1$ & $1$ & $1$
& $1$ & $1$ & $1$ & $1$
\end{tabular}
\vspace{0.5em}
\caption{Values of
$\Bar{\bd}(\cU)-|\cU\backslash\cS|+\Bar{\bd}(\cS\backslash\cU)$
on the event $\{\Xdata=1\}$ for all $\cS,\cU\subseteq\cI$.}
\label{tab:dI_second_interpolation}
\end{table}

\subsection{Maximization of $\bd$ in \exref{dI}}\label{app:max_dI}
When $\Xdata=0$, we have $\bd(\cU)=0$ for all $\cU \subseteq \cI$. Hence $\cA = 2^{\cI}$ and
\begin{equation*}
\tilde{\bd}(\cS)
=
\min_{\cJ \in 2^{\cI}}|\cJ \cap \cS| = |\varnothing \cap \cS| 
=
0
\quad\text{for all $\cS \subseteq \cI.$}
\end{equation*}
When $\Xdata=1$, a direct check gives
$
\cA 
=
\{
\{1,2\},
\{1,3\},
\{2,3\},
\{1,2,3\}
\}$,
from which
\begin{equation*}
\tilde{\bd}(\cS) = \min_{\cJ \in \cA}|\cJ \cap \cS|
=
\begin{cases}
0 & \text{ if } |\cS|\leq 1\\
1 & \text{ if } |\cS|=2\\
2 & \text{ if } |\cS|=3
\end{cases}
\quad\text{for all $\cS \subseteq \cI.$}
\end{equation*}
Hence, \eqref{dI_hat} is indeed the maximization of $\bd$.

\section{Proofs of \lemref{d_hat_improve} and \propref{max_is_max}}\label{app:perfect_coherence}

\subsection{Proof of \lemref{d_hat_improve}}
\begin{proof}[\unskip\nopunct]
We first demonstrate that $\tilde{\bd}$ has a $(1 - \alpha)$ guarantee. Define the event 
    \begin{equation*}
        \cE = \{ \bd(\cS) \leq |\cI_1 \cap \cS| \text{ for all $\cS \subseteq \cI$} \},
    \end{equation*}
    where $\P(\cE) \geq 1-\alpha$ for all $\P \in \cP$ by the $(1-\alpha)$ guarantee of $\bd$. If $\cE$ happened, then 
    \begin{equation*}
        \tilde{\bd}(\cS) = \min_{\cJ \in \cA}|\cJ \cap \cS| \leq |\cI_1 \cap \cS| \quad \text{for all $\cS \subseteq \cI$}
    \end{equation*}
    by the formulation of $\cA$. Hence, for all $\P \in \cP$,
    \begin{equation*}
        \P \left( \tilde{\bd}(\cS) \leq |\cI_1 \cap \cS| \text{ for all $\cS \subseteq \cI$} \right) \geq  \P(\cE) \geq 1-\alpha.
    \end{equation*}
Hence, $\tilde{\bd}$ does indeed have a $(1 - \alpha)$ guarantee.
    
Next, we show that $\check{\bd}(\cS) \leq \tilde{\bd}(\cS)$ for all $\cS \subseteq \cI$. Fix any $\cS \subseteq \cI$.
Let $\bd^{(0)}=\bd$, and let $\bd^{(k)}$ denote the procedure obtained after $k$ rounds of interpolation. We show by induction that the statement:
\begin{equation}\label{interpolation_feasible}
    \bd^{(k)}(\cS)\leq |\cJ\cap\cS|
    \quad\text{for all $\cS\subseteq\cI$ and $\cJ\in\cA$,}
\end{equation}
holds for all $k \geq 0$. 

For $k=0$, \eqref{interpolation_feasible} follows directly from the formulation of $\cA$. Now suppose \eqref{interpolation_feasible} holds for some $k$ and fix any $\cS\subseteq\cI$ and $\cJ\in\cA$. Then, for any $\cU \subseteq \cI$,
\begin{align}
    \bd^{(k)}(\cU) - |\cU \backslash \cS| +\bd^{(k)}(\cS\setminus\cU) &\leq |\cJ \cap \cU| - |\cU \backslash \cS| + |\cJ \cap (\cS \backslash \cU )| \nonumber \\
    &\leq |\cJ\cap (\cU \cap \cS) |+|\cJ\cap(\cS\setminus\cU)|  \nonumber \\
    &=|\cJ\cap\cS|.  \label{dk}
\end{align}
By the interpolation formulation, we have that
\begin{equation}\label{dkplus1}
    \bd^{(k+1)}(\cS) = \max_{\cU \subseteq \cI} \left\{  \bd^{(k)}(\cU) - |\cU \backslash \cS| + \bd^{(k)}(\cS\setminus\cU) \right\}.
\end{equation}
Combining \eqref{dk} and \eqref{dkplus1} yields $\bd^{(k+1)}(\cS)\leq|\cJ\cap\cS|$. Hence, \eqref{interpolation_feasible} holds true for all $k > 0$ by induction.

Since $\check{\bd}$ is obtained after finitely many rounds of interpolation, it follows from \eqref{interpolation_feasible} that
\begin{equation*}
    \check{\bd}(\cS)\leq|\cJ\cap\cS|
     \quad\text{for all $\cS\subseteq\cI$ and $\cJ\in\cA$.}
\end{equation*}
Therefore, it holds that
\begin{equation*}
    \check{\bd}(\cS)
    \leq
    \min_{\cJ\in\cA}|\cJ\cap\cS|
    =
    \tilde{\bd}(\cS) \quad \text{for all $\cS\subseteq\cI$.}
\end{equation*}
\end{proof}

\subsection{Proof of \propref{max_is_max}}
\begin{proof}[\unskip\nopunct]

    Let $\tilde{\bd}^{(2)}$ denote the maximization of $\tilde{\bd}$, where $\tilde{\bd}$ is the maximization of $\bd$. Then for all $\cS \subseteq \cI$,
    \begin{equation*}
        \tilde{\bd}^{(2)}(\cS)= \min_{J \in \tilde{\cA}} |\cJ \cap \cS| \quad \text{where} \quad
        \tilde{\cA} = \cA(\tilde{\bd}) .
    \end{equation*}
    To prove \propref{max_is_max}, i.e., that $\tilde{\bd}^{(2)} = \tilde{\bd}$, it suffices to show that $\tilde{\cA} = \cA$. 
    
    For any given $\cJ \in \cA$, 
     \begin{equation*}
         \tilde{\bd}(\cS) = \min_{\cV \in \cA}|\cV \cap \cS| \leq |\cJ \cap \cS| \quad \text{for all $\cS \subseteq \cI$.}
     \end{equation*}
     Hence, $\cA \subseteq \tilde{\cA}$ by the formulation of $\tilde{\cA}$. Now it remains to show that $\tilde{\cA} \subseteq \cA$.
     
    For any $\cJ \in \tilde{\cA}$, we have that
    \begin{equation*}
       \bd(\cS) \leq \tilde{\bd}(\cS) \leq \tilde{\bd}^{(2)}(\cS)  \leq |\cJ \cap \cS| \quad \text{for all $\cS \subseteq \cI$}.
    \end{equation*}
    Hence, $\tilde{\cA} \subseteq \cA$ by the formulation of $\cA$.
\end{proof}

\section{Results and proofs relating to \thmref{TU_equivalence} and \propref{forest_TU}}\label{app:TU_equivalence}
\subsection{Technical results for proving \thmref{TU_equivalence}}
\begin{lemma}\label{lem:binary_representation}
Fix any $\cS\subseteq\cI$. For each $\cJ\subseteq\cI$, define the indicator variable
\begin{equation*}
    w_i(\cJ)=\mathbbm{1}\{i\in\cJ\} \quad \text{for each $i\in\cS$.}
\end{equation*}
Then 
\begin{equation*}
    \left\{
        (w_i(\cJ))_{i\in\cS} :
        \cJ\subseteq\cI,\ 
        \cI\backslash\cS\subseteq\cJ
    \right\}
    =
    \{0,1\}^{|\cS|}.
\end{equation*}
\end{lemma}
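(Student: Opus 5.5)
The claim is a set equality between subsets of $\{0,1\}^{|\cS|}$, so I will prove the two inclusions separately. I index vectors in $\{0,1\}^{|\cS|}$ by the elements of $\cS$, meaning a vector is a tuple $(b_i)_{i\in\cS}$ with each $b_i\in\{0,1\}$.

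For the inclusion ``$\subseteq$'', take any $\cJ\subseteq\cI$ with $\cI\backslash\cS\subseteq\cJ$. Each $w_i(\cJ)=\mathbbm{1}\{i\in\cJ\}$ is an indicator and so lies in $\{0,1\}$. Hence $(w_i(\cJ))_{i\in\cS}\in\{0,1\}^{|\cS|}$. The constraint $\cI\backslash\cS\subseteq\cJ$ plays no role in this direction.

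For the inclusion ``$\supseteq$'', fix an arbitrary $(b_i)_{i\in\cS}\in\{0,1\}^{|\cS|}$. I construct the witness
\begin{equation*}
    \cJ=(\cI\backslash\cS)\cup\{i\in\cS: b_i=1\}.
\end{equation*}
Then $\cJ\subseteq\cI$ and $\cI\backslash\cS\subseteq\cJ$ by construction. For each $i\in\cS$, we have $i\notin\cI\backslash\cS$. So $i\in\cJ$ if and only if $b_i=1$, which gives $w_i(\cJ)=b_i$. The given vector is therefore attained.

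Equivalently, the map $\cJ\mapsto\cJ\cap\cS$ is a bijection from $\{\cJ\subseteq\cI:\cI\backslash\cS\subseteq\cJ\}$ onto $2^{\cS}$, and $2^{\cS}$ is identified with $\{0,1\}^{|\cS|}$ through indicator vectors. There is no substantive obstacle here. The only point needing care is that membership of $i\in\cS$ in $\cJ$ is unaffected by the forced inclusion of $\cI\backslash\cS$, because $\cS$ and $\cI\backslash\cS$ are disjoint.
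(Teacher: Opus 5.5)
Your proof is correct and follows essentially the paper's argument: the inclusion $\subseteq$ because indicators take values in $\{0,1\}$, and the reverse inclusion via the same witness $\cJ=(\cI\backslash\cS)\cup\{i\in\cS:b_i=1\}$. Your remark that $\cS$ and $\cI\backslash\cS$ are disjoint, so that $w_i(\cJ)=b_i$ for $i\in\cS$, justifies the one step the paper states without comment.
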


\begin{proof}[Proof of \lemref{binary_representation}]
For any $\cJ\subseteq\cI$, each $w_i(\cJ)$ is a value in $\{0,1\}$. Hence,
\begin{equation*}
    \left\{
        (w_i(\cJ))_{i\in\cS} :
        \cJ\subseteq\cI,\ 
        \cI\backslash\cS\subseteq\cJ
    \right\}
    \subseteq
    \{0,1\}^{|\cS|}.
\end{equation*}
It now remains to prove the reverse inclusion. Fix any binary vector 
\begin{equation*}
    \mathbf{x}=(x_i)_{i\in\cS}\in\{0,1\}^{|\cS|}
\end{equation*}
and define the set
\begin{equation*}
    \cJ_{\mathbf{x}}
    =
    (\cI\backslash\cS)
    \cup
    \{i\in\cS:x_i=1\}.
\end{equation*}
Then $\cJ_{\mathbf{x}}\subseteq\cI$ and $\cI\backslash\cS\subseteq\cJ_{\mathbf{x}}$ by construction. Moreover, for each $i\in\cS$,
\begin{equation*}
    w_i(\cJ_{\mathbf{x}})
    =
    \mathbbm{1}\{i\in\cJ_{\mathbf{x}}\}
    =
    x_i.
\end{equation*}
Hence $(w_i(\cJ_{\mathbf{x}}))_{i\in\cS}= \mathbf{x}$. Since $\mathbf{x} \in\{0,1\}^{|\cS|}$ is arbitrary and 
\begin{equation*}
    (w_i(\cJ_{\mathbf{x}}))_{i\in\cS} \in \left\{
    (w_i(\cJ))_{i\in\cS} :
    \cJ\subseteq\cI,\ 
    \cI\backslash\cS\subseteq\cJ
    \right\},
\end{equation*}
the reverse inclusion is proven.
\end{proof}

\begin{lemma}\label{lem:for_programming_representation}
    Fix any $\cS\subseteq\cI$. For the TDG procedure $d$ defined in \thmref{TU_equivalence}, define 
\begin{equation*}
    q_k = \bd(\cR_k)-|\cR_k\backslash\cS| \quad \text{for $k = 1,\dots,n$.}
\end{equation*}
Then there exists a non-negative integer $y_k^{\star} \in \mathbb{Z}_+$ for $k = 1,\dots,n$ such that
\begin{equation*}
    \tilde{\bd}(\cS) = \sum_{k=1}^n y_k^{\star}q_k - \sum_{i\in\cS}(r_i-1)_+,
\end{equation*}
where $r_i = \sum_{k:i\in\cR_k} y_k^{\star}$ and $(r_i-1)_+ = (r_i-1) \vee 0$ for each $i \in \cS$.

\end{lemma}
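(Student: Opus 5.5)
The plan is to write $\tilde{\bd}(\cS)$ as the value of a $0$--$1$ integer program, use total unimodularity to replace it by its linear relaxation, and then read the stated formula off an integral optimal solution of the LP dual. We work on the almost-sure event on which $\mathbf{M}$ is totally unimodular, as assumed in \thmref{TU_equivalence}.

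\textbf{Step 1: reduction to sets containing $\cI\backslash\cS$.} If $\cJ\in\cA$, then $\cJ\cup(\cI\backslash\cS)\in\cA$, because enlarging $\cJ$ cannot violate any constraint $\bd(\cW)\le|\cJ\cap\cW|$. This enlargement leaves $|\cJ\cap\cS|$ unchanged. Hence
\[
\tilde{\bd}(\cS)=\min\{|\cJ\cap\cS| : \cJ\in\cA,\ \cI\backslash\cS\subseteq\cJ\}.
\]
For such $\cJ$ and any $\cW$ we have $|\cJ\cap\cW|=|\cW\backslash\cS|+\sum_{i\in\cW\cap\cS}w_i(\cJ)$. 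Constraints with $\bd(\cW)=0$ hold automatically, so only the support sets $\cR_k$ matter. Membership in $\cA$ therefore becomes
\[
\sum_{i\in\cR_k\cap\cS}w_i\ge q_k \quad\text{for } k=1,\dots,n.
\]
By \lemref{binary_representation}, $w=(w_i(\cJ))_{i\in\cS}$ ranges over all of $\{0,1\}^{|\cS|}$. Thus $\tilde{\bd}(\cS)$ is the optimal value of the integer program
\[
\min \textstyle\sum_{i\in\cS}w_i \quad \text{s.t.}\quad \mathbf{M}_{\cS}w\ge q,\ \ w\in\{0,1\}^{|\cS|},
\]
where $\mathbf{M}_{\cS}$ consists of the columns of $\mathbf{M}$ indexed by $\cS$. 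This program is feasible, since $w\equiv 1$ corresponds to $\cJ=\cI\in\cA$.

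\textbf{Step 2: the LP relaxation is exact.} Deleting columns preserves total unimodularity, and so does appending rows of an identity matrix. Hence the constraint system $\mathbf{M}_{\cS}w\ge q$, $w\le \mathbf{1}$, $w\ge 0$ has a totally unimodular matrix and an integer right-hand side. Its feasible polytope is nonempty and bounded, so its vertices are integral. It follows that $\tilde{\bd}(\cS)$ equals the optimal value of the linear program obtained by relaxing $w\in\{0,1\}^{|\cS|}$ to $0\le w\le 1$.

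\textbf{Step 3: LP duality.} We attach $y\in\mathbb{R}^n_{\ge0}$ to the covering constraints and $z\in\mathbb{R}^{|\cS|}_{\ge0}$ to the constraints $w\le 1$. Strong duality then gives
\[
\tilde{\bd}(\cS)=\max\Bigl\{\textstyle\sum_k y_kq_k-\sum_{i\in\cS}z_i : \ \sum_{k:i\in\cR_k}y_k-z_i\le 1\ (i\in\cS),\ y,z\ge0\Bigr\}.
\]
The dual is feasible (take $y=z=0$), and its optimum is finite because the primal is feasible and bounded. The dual constraint matrix is $[\mathbf{M}_{\cS}^{\top}\ \ -\mathbf{I}]$, which is again totally unimodular, together with nonnegativity, and the right-hand side is integral. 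The dual polyhedron is pointed because of the constraints $y,z\ge0$. Therefore the optimum is attained at an integral vertex $(y^\star,z^\star)$ with $y^\star\in\mathbb{Z}_+^n$.

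\textbf{Step 4: eliminating $z$.} For fixed $y^\star$, the dual objective is maximized by taking each $z_i$ as small as feasibility allows, namely $z_i=(r_i-1)_+$ with $r_i=\sum_{k:i\in\cR_k}y^\star_k$. This substitution yields exactly the claimed identity.

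The main obstacle is the integrality bookkeeping in Steps 2 and 3. One must check that total unimodularity survives column restriction, transposition and appending $\pm$ identity blocks. One must also confirm that both polyhedra are nonempty with finite optima, so that integral optimal vertices exist on both sides and strong duality applies.
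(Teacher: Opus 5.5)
Your proposal is correct and follows essentially the same route as the paper. You reduce to $\cJ\supseteq\cI\backslash\cS$, encode $\tilde{\bd}(\cS)$ as a $0$--$1$ covering program via \lemref{binary_representation}, drop integrality by total unimodularity, pass to the LP dual, take an integral dual optimum, and eliminate $z$ via $z_i=(r_i-1)_+$; your explicit checks of feasibility, boundedness and pointedness, which the paper leaves implicit, tighten the integrality bookkeeping without changing the argument.
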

\begin{proof}[Proof of \lemref{for_programming_representation}]
    First, note that adding the elements of $\cI \backslash \cS$ to a candidate set $\cJ \in \cA$ does not change $|\cJ\cap\cS|$ and does not violate the constraint $\bd(\cR)\leq|\cJ\cap\cR|$ for all $\cR \subseteq \cI$. Hence,
\begin{align}
    \tilde{\bd}(\cS)
    &= \min_{\cJ \in \cA} \left\{ |\cJ\cap\cS| \right\} \nonumber \\
    &=
    \min_{\substack{\cJ\in \cA\\
         \cI\backslash\cS\subseteq\cJ}}
    \left\{ |\cJ\cap\cS| \right\} \nonumber \\
    &=
   \min_{\substack{\cJ\subseteq\cI\\
         \cI\backslash\cS\subseteq\cJ}}
    \left\{
        |\cJ\cap\cS|:
        \bd(\cR)\leq|\cJ\cap\cR|
        \text{ for all }\cR\subseteq\cI
    \right\} \nonumber \\
    &=
    \min_{\substack{\cJ\subseteq\cI\\
         \cI\backslash\cS\subseteq\cJ}}
    \left\{
        |\cJ\cap\cS|:
        \bd(\cR_k)\leq|\cJ\cap\cR_k|
        \text{ for $k = 1,\dots,n$} 
    \right\}, \label{max_d_ISJ}
\end{align}
where the last equality follows because $\bd(\cR)=0$ for every $\cR\notin\mathcal F(\bd)$.  

Consider any $\cJ \subseteq \cI$ such that $\cI\backslash\cS\subseteq\cJ$. Since $\cR_k$ can be expressed as
\begin{equation*}
    \cR_k  = (\cR_k \cap \cS) \cup (\cR_k \backslash \cS)
\end{equation*}
for $k = 1,\dots,n$, it follows that
\begin{align}\label{full_JR}
    |\cJ \cap \cR_k| = |\cJ \cap \cR_k \cap \cS| + |\cJ \cap (\cR_k \backslash \cS)|.
\end{align}
Moreover, since $(\cR_k \backslash \cS) \subseteq (\cI \backslash \cS) \subseteq \cJ$, we have that
\begin{equation}\label{JcapRbackS}
    \cJ \cap (\cR_k \backslash \cS)
    =
    \cR_k \backslash \cS.
\end{equation}
Combining \eqref{full_JR} and \eqref{JcapRbackS}, we get
\begin{equation*}
    |\cJ \cap \cR_k|
    =
    |\cJ \cap \cR_k \cap \cS|
    +
    |\cR_k \backslash \cS|.
\end{equation*}
Hence, \eqref{max_d_ISJ} is equivalent to
\begin{equation}\label{for_programming_representation}
    \tilde{\bd}(\cS)
    =
    \min_{\substack{\cJ\subseteq\cI\\
        \cI\backslash\cS\subseteq\cJ}}
    \left\{
        |\cJ\cap\cS|:
        \bd(\cR_k)
        \leq
        |\cJ\cap\cR_k\cap\cS|
        +
        |\cR_k\backslash\cS| \text{ for $k = 1,\dots,n$}
    \right\}.
\end{equation}

For a given candidate set $\cU \subseteq \cI$, define the indicator variable
\begin{equation*}
    w_i(\cU) = \mathbbm{1}\{i\in\cU\} \quad \text{for each $i\in\cS$.}
\end{equation*}
Moreover, let
\begin{equation*}
    \mathbf{x}=(x_i)_{i\in\cS}\in\{0,1\}^{|\cS|}
\end{equation*}
denote an arbitrary binary vector of length $|\cS|$. Then it follows from \eqref{for_programming_representation} and \lemref{binary_representation} that
\begin{align}
    \tilde{\bd}(\cS)
    &=
    \min_{\substack{\cJ\subseteq\cI\\
        \cI\backslash\cS\subseteq\cJ}}
    \left\{
        \sum_{i\in\cS} w_i(\cJ):
        \bd(\cR_k)
        \leq
        \sum_{i\in\cR_k\cap\cS} w_i(\cJ)
        +
        |\cR_k\backslash\cS| \text{ for $k = 1,\dots,n$} 
    \right\} \nonumber \\
    &=
    \min_{\mathbf{x} \in\{0,1\}^{|\cS|}}
    \left\{
        \sum_{i\in\cS}x_i:
        \sum_{i\in\cR_k\cap\cS}x_i
        \geq
        \bd(\cR_k)-|\cR_k\backslash\cS| \text{ for $k = 1,\dots,n$} 
    \right\} \nonumber 
\end{align}
Let $\mathbf{q}=(q_1,\ldots,q_n)^\top$ where $ q_k = \bd(\cR_k)-|\cR_k\backslash\cS|$ and $\mathbf{M}_{\cS}$ denote the $n\times|\cS|$ submatrix of $\mathbf{M}$ containing the columns indexed by $\cS$. Moreover, let $\mathbf{0}_{|\cS|}$ and $\mathbf{1}_{|\cS|}$ denote vectors of all zeros and ones of length $|\cS|$, respectively, and let $\mathbf{I}_{|\cS|}$ denote the $|\cS|\times|\cS|$ identity matrix. Then it follows that
\begin{align}
    \tilde{\bd}(\cS) &= \min
    \left\{
        \mathbf{1}_{|\cS|}^\top\mathbf{x}:
        \mathbf{M}_{\cS} \, \mathbf{x}\geq\mathbf{q},\
        \mathbf{0}_{|\cS|}\leq\mathbf{x}\leq\mathbf{1}_{|\cS|},\
        \mathbf{x}\in\mathbb{Z}^{|\cS|}
    \right\} \nonumber \\
    &= \min
    \left\{
        \mathbf{1}_{|\cS|}^\top\mathbf{x}:
        \begin{bmatrix}
            \mathbf{M}_{\cS} \\
            -\mathbf{I}_{|\cS|}
        \end{bmatrix}
        \mathbf{x}
        \geq
        \begin{bmatrix}
            \mathbf{q}\\
            -\mathbf{1}_{|\cS|}
        \end{bmatrix}, \
        \mathbf{x}\geq\mathbf{0}_{|\cS|}, \
        \mathbf{x}\in\mathbb{Z}^{|\cS|} 
    \right\}. \label{ILP}
\end{align}
Since $\mathbf{M}_{\cS}$ is a submatrix of the totally unimodular matrix $\mathbf{M}$, it is also totally unimodular. Moreover, adjoining rows of $-\mathbf{I}_{|\cS|}$ preserves total unimodularity. Since $\mathbf{q}$ and $\mathbf{1}_{|\cS|}$ are integer-valued, the integrality theorem for totally unimodular linear programs \citep[Corollary 8.1b]{Schrijver2004} implies that the linear programming relaxation of \eqref{ILP} has an integer-valued optimal solution. In other words, the constraint $\mathbf{x} \in \mathbb{Z}^{|\cS|}$ in \eqref{ILP} is redundant. Therefore,
\begin{equation}
    \tilde{\bd}(\cS)
    =
    \min
    \left\{
        \mathbf{1}_{|\cS|}^\top \mathbf{x}:
        \begin{bmatrix}
            \mathbf{M}_{\cS} \\
            -\mathbf{I}_{|\cS|}
        \end{bmatrix}
        \mathbf{x}
        \geq
        \begin{bmatrix}
            \mathbf{q}\\
            -\mathbf{1}_{|\cS|}
        \end{bmatrix}, \
        \mathbf{x}\geq\mathbf{0}_{|\cS|}
    \right\}.
    \label{ILP1}
\end{equation}
If we take \eqref{ILP1} as the primal problem, then its dual is given by
\begin{equation}
    \tilde{\bd}(\cS)
    =
    \max
    \left\{
        \begin{bmatrix}
            \mathbf{q}\\
            -\mathbf{1}_{|\cS|}
        \end{bmatrix}^{\top}\mathbf{u}:
        \begin{bmatrix}
            \mathbf{M}_{\cS}\\
            -\mathbf{I}_{|\cS|}
        \end{bmatrix}^{\top}
        \mathbf{u}
        \leq
        \mathbf{1}_{|\cS|}, \
        \mathbf{u}\geq\mathbf{0}_{n+|\cS|}
    \right\}
    \label{ILP2}
\end{equation}
where $\mathbf{u}$ denotes a vector of length $n + |\cS|$. Specifically, let $\mathbf{u}=(\mathbf{y},\mathbf{z})$, where
$\mathbf{y} = (y_k)^n_{k = 1} \in\mathbb{R}^n$ and
$\mathbf{z} = (z_i)_{i \in \cS} \in\mathbb{R}^{|\cS|}$. Then \eqref{ILP2} becomes
\begin{equation}\label{ILP3}
    \tilde{\bd}(\cS)
    =
    \max
    \left\{
        \mathbf{q}^\top\mathbf{y}
        -
        \mathbf{1}_{|\cS|}^\top\mathbf{z}:
        \mathbf{M}_{\cS}^\top\mathbf{y}
        -
        \mathbf{z}
        \leq
        \mathbf{1}_{|\cS|}, \
        \mathbf{y}\geq\mathbf{0}_n, \
        \mathbf{z}\geq\mathbf{0}_{|\cS|}
    \right\}.
\end{equation}
Since total unimodularity is preserved under transposition and $\mathbf{1}_{|S|}$ is integer-valued, the same integrality theorem implies that the dual problem \eqref{ILP2} has an integer-valued optimal solution. Therefore, $\tilde{d}(S)$ can be expressed as
\begin{equation}\label{ILP4}
    \tilde{\bd}(\cS) = \mathbf{q}^\top\mathbf{y}^{\star}
        - \mathbf{1}_{|\cS|}^\top\mathbf{z}^{\star}
\end{equation}
where
\begin{equation*}
    \mathbf{y}^{\star}= (y^\star_k)^n_{k = 1} \in\mathbb{Z}_+^n
    \quad\text{and}   \quad
    \mathbf{z}^{\star} = (z^\star_i)_{i \in \cS} \in\mathbb{Z}_+^{|\cS|}.
\end{equation*}

For each $i\in\cS$, the constraints in \eqref{ILP3} imply that
\begin{equation*}
    \sum_{k:i\in\cR_k} y_k - z_i\leq1,
\end{equation*}
or equivalently, 
\begin{equation}\label{z_constraint}
    z_i\geq \left( \sum_{k:i\in\cR_k} y_k -1 \right)_+,
\end{equation}
since $z_i$ is a non-negative integer. Given the form of $\tilde{\bd}(\cS)$ in \eqref{ILP4}, it must follow from \eqref{z_constraint} that the dual objective in \eqref{ILP3} is maximized by taking
\begin{equation*}
    z_i^{\star}=(r_i-1)_+ \quad \text{for each $i\in\cS$}
\end{equation*}
where $r_i = \sum_{k:i\in\cR_k} y_k^{\star}$. Therefore, \eqref{ILP4} can be written as
\begin{align*}
    \tilde{\bd}(\cS)
    =
    \mathbf{q}^\top\mathbf{y}^{\star}
    -
    \mathbf{1}_{|\cS|}^\top\mathbf{z}^{\star} 
    =
    \sum_{k=1}^n y_k^{\star}q_k
    -
    \sum_{i\in\cS}(r_i-1)_+.
\end{align*}
\end{proof}

\begin{lemma}\label{lem:coherent_aggregation}
For any given $\cS\subseteq\cI$, let $\cB_1,\ldots,\cB_n\subseteq\cS$ be $n$ arbitrary subsets of $\cS$. Moreover, for an arbitrary vector of non-negative integers $\mathbf{a}=(a_1,\ldots,a_n)^\top\in\mathbb{Z}_+^n$, define
\begin{equation*}
    \cU_{\mathbf{a}} = \bigcup_{k:a_k>0}\cB_k \quad \text{and} \quad r_{\mathbf{a},i} = \sum_{k:i\in\cB_k}a_k \quad \text{for each $i\in\cS$.}
\end{equation*}
If $\check{\bd}$ is a coherent TDG procedure, then
\begin{equation}\label{coherent_aggregation}
    \check{\bd}(\cU_{\mathbf{a}})
    \geq
    \sum_{k=1}^n a_k\check{\bd}(\cB_k)
    -
    \sum_{i\in\cS}(r_{\mathbf{a},i}-1)_+.
\end{equation}
\end{lemma}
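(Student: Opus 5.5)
The plan is to prove \eqref{coherent_aggregation} by induction on the total weight $|\mathbf{a}| = \sum_{k=1}^n a_k$, using only the defining property of coherence. Since $\check{\bd}=\Bar{\check{\bd}}$, the bound obtained from any single choice of $\cU$ in the interpolation formula is dominated by $\check{\bd}$:
\begin{equation*}
    \check{\bd}(\cS') \geq \check{\bd}(\cU) - |\cU\backslash\cS'| + \check{\bd}(\cS'\backslash\cU)
    \quad\text{for all } \cS',\cU\subseteq\cI .
\end{equation*}
We also use the standing conventions $\check{\bd}\geq 0$ and $\check{\bd}(\varnothing)=0$.

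\textbf{Key inequality.} First we show that for any $\cV,\cB\subseteq\cI$,
\begin{equation*}
    \check{\bd}(\cV\cup\cB) \geq \check{\bd}(\cV) + \check{\bd}(\cB) - |\cV\cap\cB| .
\end{equation*}
\begin{itemize}
\item Apply coherence with $\cS'=\cV\cup\cB$ and $\cU=\cB$. Since $\cB\backslash\cS'=\varnothing$ and $\cS'\backslash\cU=\cV\backslash\cB$, this gives $\check{\bd}(\cV\cup\cB)\geq \check{\bd}(\cB)+\check{\bd}(\cV\backslash\cB)$.
\item Apply coherence with $\cS'=\cV\backslash\cB$ and $\cU=\cV$. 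Here $\cU\backslash\cS'=\cV\cap\cB$ and $\cS'\backslash\cU=\varnothing$, so $\check{\bd}(\cV\backslash\cB)\geq \check{\bd}(\cV)-|\cV\cap\cB|$.
\end{itemize}
Chaining the two bounds gives the key inequality.

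\textbf{Induction.} For the base case $\mathbf{a}=\mathbf{0}$, we have $\cU_{\mathbf{a}}=\varnothing$ and all $r_{\mathbf{a},i}=0$. Both sides of \eqref{coherent_aggregation} are then $0$.

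For the inductive step, take $\mathbf{a}$ with $|\mathbf{a}|\geq1$, pick $k$ with $a_k>0$, and set $\mathbf{a}'=\mathbf{a}-\mathbf{e}_k$. Then $\cU_{\mathbf{a}}=\cU_{\mathbf{a}'}\cup\cB_k$. This holds whether or not $a'_k>0$: if $a'_k>0$, then $\cB_k\subseteq\cU_{\mathbf{a}'}$ already.

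Next compare the penalty terms. We have $r_{\mathbf{a},i}=r_{\mathbf{a}',i}+\mathbbm{1}\{i\in\cB_k\}$. For $i\in\cB_k$, the increment $(r_{\mathbf{a},i}-1)_+-(r_{\mathbf{a}',i}-1)_+$ equals $1$ exactly when $r_{\mathbf{a}',i}\geq1$, and equals $0$ otherwise. Moreover, $r_{\mathbf{a}',i}\geq 1$ holds iff $i$ lies in some $\cB_j$ with $a'_j>0$, i.e.\ iff $i\in\cU_{\mathbf{a}'}$. Hence
\begin{equation*}
    \sum_{i\in\cS}(r_{\mathbf{a},i}-1)_+ = \sum_{i\in\cS}(r_{\mathbf{a}',i}-1)_+ + |\cB_k\cap\cU_{\mathbf{a}'}| .
\end{equation*}

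Applying the key inequality with $\cV=\cU_{\mathbf{a}'}$ and $\cB=\cB_k$, and then the induction hypothesis for $\mathbf{a}'$, gives
\begin{equation*}
    \check{\bd}(\cU_{\mathbf{a}})\geq \sum_{j} a'_j\check{\bd}(\cB_j)-\sum_{i\in\cS}(r_{\mathbf{a}',i}-1)_+ + \check{\bd}(\cB_k)-|\cB_k\cap\cU_{\mathbf{a}'}| .
\end{equation*}
By the penalty identity above, the right-hand side is exactly the right-hand side of \eqref{coherent_aggregation} for $\mathbf{a}$, which closes the induction.

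The main obstacle is the penalty bookkeeping, specifically identifying the increment with $|\cB_k\cap\cU_{\mathbf{a}'}|$. The point that needs care is that $r_{\mathbf{a}',i}\geq1$ coincides exactly with membership in $\cU_{\mathbf{a}'}$, since the $a_j$ are nonnegative integers. Once that identification is made, the key inequality is precisely the two-step interpolation argument above.
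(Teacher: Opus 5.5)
Your proof is correct and follows the paper's argument. Both use induction on $\sum_k a_k$, the two-set bound $\check{\bd}(\cV\cup\cB)\geq\check{\bd}(\cV)+\check{\bd}(\cB)-|\cV\cap\cB|$ with $\cV=\cU_{\mathbf{a}'}$, and the same identification of the penalty increment with $|\cB_k\cap\cU_{\mathbf{a}'}|$ via $r_{\mathbf{a}',i}\geq 1 \iff i\in\cU_{\mathbf{a}'}$. The only difference is that you derive the two-set bound directly from the interpolation formula, removing the overlap from $\cV$, whereas the paper cites Lemma 3 of \citet{Goeman2021} and removes the overlap from $\cB_j$; this makes your version slightly more self-contained.
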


\begin{proof}[Proof of \lemref{coherent_aggregation}]
Fix any $\cS \subseteq \cI$ and $\cB_1,\ldots,\cB_n\subseteq\cS$. Let the sum of the elements in $\mathbf{a}=(a_1,\ldots,a_n)^\top\in\mathbb{Z}_+^n$ be denoted as
\begin{equation*}
    L=\sum_{k=1}^n a_k.
\end{equation*}
We will prove the lemma by showing that \eqref{coherent_aggregation} holds for $L = 0,1,2,\dots$ via induction. For the initial case $L=0$, we must have $a_k=0$ for $k = 1,\dots,n$, so $\cU_{\mathbf{a}}=\varnothing$ and $r_{\mathbf{a},i}=0$ for every $i\in\cS$. Hence, the left- and right-hand sides of \eqref{coherent_aggregation} are zero, and the inequality in \eqref{coherent_aggregation} therefore must hold. 

We now make the inductive assumption that \eqref{coherent_aggregation} holds for every $\mathbf{a} \in \mathbb{Z}_+^n$ whose elements sum to $L-1$, where $L\geq1$. Consider an arbitrary vector of non-negative integers
\begin{equation}\label{b_vec}
    \mathbf{b}=(b_1,\ldots,b_n)^\top\in\mathbb{Z}_+^n \quad \text{such that} \quad \sum_{k=1}^n b_k=L.
\end{equation}
Fix a $j \in \{1, \dots, n\}$ such that $b_j\geq1$, and define
$\mathbf{a}'=(a_1',\ldots,a_n')^\top$ by
\begin{equation*}
    a_j'=b_j-1
    \qquad\text{and}\qquad
    a_k'=b_k
    \quad\text{for $k\neq j$.}
\end{equation*}
Then the sum of the elements in $\mathbf{a}'$ is
\begin{equation}\label{a_prime_sum}
    \sum_{k=1}^n a_k'=L-1.
\end{equation}
Now define
\begin{equation}\label{primes}
    \cU_{\mathbf{a}'} = \bigcup_{k:a_k'>0}\cB_k \quad \text{where} \quad r_{\mathbf{a}',i} = \sum_{k:i\in\cB_k}a_k' \quad \text{for each $i \in \cS$.}
\end{equation}
By the inductive assumption, it must follow from \eqref{a_prime_sum} that
\begin{equation}\label{aggregation_induction}
    \check{\bd}(\cU_{\mathbf{a}'})
    \geq
    \sum_{k=1}^n a_k'\check{\bd}(\cB_k)
    -
    \sum_{i\in\cS}(r_{\mathbf{a}',i}-1)_+.
\end{equation}

Define the set
\begin{equation}\label{U_equality}
    \cU_{\mathbf{b}} =
    \bigcup_{k:b_k>0}\cB_k =
    \cU_{\mathbf{a}'}\cup\cB_j,
\end{equation}
where the second equality follows from the formulation of $\mathbf{a}'$ and $\cU_{\mathbf{a}'}$.
By Lemma 3 of \citet{Goeman2021}, coherence implies for any
disjoint $\cV,\cW\subseteq\cI$, that
\begin{equation*}
    \check{\bd}(\cV)+\check{\bd}(\cW)
    \leq
    \check{\bd}(\cV\cup\cW)
    \leq
    \check{\bd}(\cV)+|\cW|.
\end{equation*}
Applying the first inequality to the disjoint sets $\cU_{\mathbf{a}'}$ and $\cB_j\backslash\cU_{\mathbf{a}'}$, and the second inequality to the disjoint sets $\cB_j\backslash\cU_{\mathbf{a}'}$ and $\cU_{\mathbf{a}'} \cap \cB_j$, gives
\begin{equation*}
    \check{\bd}(\cU_{\mathbf{a}'})+\check{\bd}(\cB_j\backslash\cU_{\mathbf{a}'})
    \leq
    \check{\bd}(\cU_{\mathbf{a}'}\cup\cB_j) \quad \text{and} \quad
    \check{\bd}(\cB_j) \leq \check{\bd}(\cB_j\backslash\cU_{\mathbf{a}'})+|\cU_{\mathbf{a}'} \cap \cB_j|,
\end{equation*}
respectively. These two inequalities imply that
\begin{equation}\label{coherence_inequality}
    \check{\bd}(\cU_{\mathbf{a}'}\cup\cB_j)
    \geq
    \check{\bd}(\cU_{\mathbf{a}'}) + \check{\bd}(\cB_j) -|\cU_{\mathbf{a}'}\cap\cB_j|.
\end{equation}

For the vector $\mathbf{b}$, define for each $i \in \cS$,
\begin{equation*}
    r_{\mathbf{b},i}
    =
    \sum_{k:i\in\cB_k}b_k. 
\end{equation*}
Since $\mathbf{b}$ is obtained from $\mathbf{a}'$ by adding one to its $j$th element, we have that
\begin{equation*}
    r_{\mathbf{b},i} = r_{\mathbf{a}',i}+\mathbbm{1}\{i\in\cB_j\}.
\end{equation*}
Therefore, it follows that
\begin{align*}
    (r_{\mathbf{b},i}-1)_+-(r_{\mathbf{a}',i}-1)_+
    &=
    \begin{cases}
        0
        & \text{if $i\notin\cB_j$}\\
        0
        & \text{if $i\in\cB_j$ and $r_{\mathbf{a}',i}=0$}\\
        1
        & \text{if $i\in\cB_j$ and $r_{\mathbf{a}',i}>0$}
    \end{cases}.
\end{align*}
By \eqref{primes}, it holds that $r_{\mathbf{a}',i}>0$ if and only if $i\in\cU_{\mathbf{a}'}$. This equivalence therefore gives
\begin{equation*}
    (r_{\mathbf{b},i}-1)_+-(r_{\mathbf{a}',i}-1)_+
    =
    \mathbbm{1}\{ \text{$i\in\cB_j$ and $r_{\mathbf{a}',i}>0$} \} =
    \mathbbm{1}\{i\in\cU_{\mathbf{a}'}\cap\cB_j\}.
\end{equation*}
Hence, $|\cU_{\mathbf{a}'}\cap\cB_j|$ can be expressed as
\begin{equation}\label{U_prime_cap_Bj}
    |\cU_{\mathbf{a}'}\cap\cB_j|
    =
    \sum_{i\in\cS}
    \left(
        (r_{\mathbf{b},i}-1)_+-(r_{\mathbf{a}',i}-1)_+
    \right).
\end{equation}

Combining \eqref{U_equality} and \eqref{coherence_inequality}, we get
\begin{align*}
    \check{\bd}(\cU_{\mathbf{b}})
    &\geq
    \check{\bd}(\cU_{\mathbf{a}'}) + \check{\bd}(\cB_j) -|\cU_{\mathbf{a}'}\cap\cB_j| \\
    &\geq
    \left( \sum_{k=1}^n a_k'\check{\bd}(\cB_k) - \sum_{i\in\cS}(r_{\mathbf{a}',i}-1)_+ \right)
    +
    \check{\bd}(\cB_j)
    -
    \sum_{i\in\cS}
    \left(
        (r_{\mathbf{b},i}-1)_+-(r_{\mathbf{a}',i}-1)_+
    \right) \\
    &= 
    \sum_{k=1}^n a_k'\check{\bd}(\cB_k)
    +
    \check{\bd}(\cB_j)
    -
    \sum_{i\in\cS}(r_{\mathbf{b},i}-1)_+ \\
    &= 
    \sum_{k=1}^n b_k\check{\bd}(\cB_k)
    -
    \sum_{i\in\cS}(r_{\mathbf{b},i}-1)_+,
\end{align*}
where the second inequality arises from \eqref{aggregation_induction} and \eqref{U_prime_cap_Bj}.

Since $\mathbf{b}$ is arbitrary and satisfies \eqref{b_vec}, \eqref{coherent_aggregation} therefore must hold for every vector $\mathbf{a}\in\mathbb{Z}_+^n$ whose elements sum to $L$, completing the induction proof.
\end{proof}

\subsection{Proof of \thmref{TU_equivalence}}
\begin{proof}[\unskip\nopunct]
Fix any $\cS\subseteq\cI$. It holds by \lemref{d_hat_improve} that $\tilde{\bd}(\cS)\geq\check{\bd}(\cS)$. It therefore remains to prove the reverse inequality to show that $\tilde{\bd}(\cS) = \check{\bd}(\cS)$.

Since $\cR_k = (\cR_k \cap \cS) \cup (\cR_k \backslash \cS)$ where $ (\cR_k \cap \cS)$ and $(\cR_k \backslash \cS)$ are disjoint, Lemma 3 of \citet{Goeman2021} gives
\begin{align}\label{qk_inequality}
    \check{\bd}(\cR_k\cap\cS) \geq
    \check{\bd}(\cR_k)-|\cR_k\backslash\cS| \geq \bd(\cR_k)-|\cR_k\backslash\cS| \quad \text{for $k = 1,\dots,n$.}
\end{align}
For $k=1,\dots,n$, let
\begin{equation*}
    \cB_k=\cR_k\cap\cS,
     \quad \text{and} \quad
    a_k=y_k^{\star},  
\end{equation*}
where $y_k^{\star}$ is as defined in \lemref{for_programming_representation}. Moreover, let
\begin{equation}\label{r_i}
     r_i = \sum_{k:i\in\cB_k} y_k^{\star} = \sum_{k:i\in\cR_k} y_k^{\star} \quad \text{for each $i \in \cS$.}
\end{equation}
Then
\begin{align*}
    \check{\bd}(\cS) 
    &\geq \check{\bd} \left( \bigcup_{k: a_k > 0} \cB_k \right) \\
    &\geq
    \sum_{k=1}^n
        y_k^{\star}\check{\bd}(\cB_k)
    -
    \sum_{i\in\cS}(r_i-1)_+\\
    &\geq
    \sum_{k=1}^n y_k^{\star} \left( \bd(\cR_k)-|\cR_k\backslash\cS| \right)
    -
    \sum_{i\in\cS}(r_i-1)_+\\
    &=
    \tilde{\bd}(\cS)
\end{align*}
where
\begin{itemize}
    \item the first inequality stems from coherence of $\check{\bd}$, since $\bigcup_{k: a_k > 0} \cB_k \subseteq \cS$;
    \item the second inequality is a consequence of \lemref{coherent_aggregation};
    \item the third inequality stems from \eqref{qk_inequality}; and
    \item the final equality follows from \lemref{for_programming_representation} and \eqref{r_i}.
\end{itemize} 
\end{proof}

\subsection{Proof of \propref{forest_TU}}
\begin{proof}[\unskip\nopunct]
    By the Ghouila--Houri characterization \citep[Theorem 19.3$(iv)$]{Schrijver1986}, a matrix $\mathbf{M} \in \{ 0,1 \}^{n \times |\cI|}$ is totally unimodular if, for every collection of rows $\cW\subseteq\{1,\dots,n\}$, there exist signs $\varepsilon_k\in\{-1,1\}$ such that
\begin{equation*}
    \sum_{k\in\cW}\varepsilon_k \mathbf{M}_{ki}
    \in
    \{-1,0,1\}
    \quad
    \text{for every $i\in\cI$.}
\end{equation*}

Fix any $\cW\subseteq\{1,\dots,n\}$ and suppose that the support family is forest-structured. For each row $k\in\cW$, let $h_k$ denote the number of sets in the selected collection $\{ \cR_j : j \in \cW \}$ that strictly contain $\cR_k$, i.e.,
\begin{equation*}
    h_k
    =
    \left|
        \left\{
            j\in\cW:
            \cR_k \subsetneq \cR_j
        \right\}
    \right|,
\end{equation*}
and set
\begin{equation*}
    \varepsilon_k=(-1)^{h_k}.
\end{equation*}
Fix any $i \in \cI$. Since the support family is forest-structured, any two support sets containing $i$ must be nested. Thus, the sets in $\{\cR_j:j\in\cW\}$ that contain $i$ can be written as
\begin{equation*}
    \cR_{k_1}
    \supsetneq
    \cR_{k_2}
    \supsetneq
    \cdots
    \supsetneq
    \cR_{k_{\ell_i}}
    \quad \text{where} \quad
    \ell_{i} = \sum_{j \in \cW} \mathbbm{1}\{ i \in \cR_j \}.
\end{equation*}
For each $r\in\{1,\dots,\ell_i\}$, the set $\cR_{k_r}$ has exactly $r-1$ strict supersets that are indexed by the rows in $\cW$. Hence,
\begin{equation*}
    h_{k_r}=r-1
    \quad\text{and}\quad
    \varepsilon_{k_r}=(-1)^{r-1}.
\end{equation*}
Since $\mathbf{M}_{ki}=1$ exactly when $i\in\cR_k$, it follows that
\begin{align*}
    \sum_{k\in\cW}\varepsilon_k\mathbf{M}_{ki}
    =
    \sum_{r=1}^{\ell_i}\varepsilon_{k_r} 
    =
    \sum_{r=1}^{\ell_i}(-1)^{r-1}
    \in
    \{0,1\}.
\end{align*}
Since $\cW$ and $i$ are arbitrary, the Ghouila--Houri characterization implies that $\mathbf{M}$ is totally unimodular.
\end{proof}

\section{Results and proofs relating to \thmref{iff_perfect_coherence}}\label{app:closure}
\subsection{Technical results for proving \thmref{iff_perfect_coherence}}

\begin{lemma}\label{lem:downward}
    Suppose $\bd(\cV) = 0$ for some $\cV \subseteq \cI$. Then
    \begin{equation*}
        \bd(\cU) = 0 \quad \text{for all} \quad \cU \subseteq \cV
    \end{equation*}
    if at least one of the following is true:
    \begin{enumerate}[(i)]
        \item $\bd$ is perfectly coherent. 
        \item $\bd$ is a closed testing procedure.
    \end{enumerate}
\end{lemma}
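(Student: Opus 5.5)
We treat the two cases separately, in each case exhibiting a witness that forces $\bd(\cU)=0$.

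\emph{Case (i): $\bd$ is perfectly coherent.} Then $\bd(\cS)=\tilde{\bd}(\cS)=\min_{\cJ\in\cA}|\cJ\cap\cS|$ for every $\cS\subseteq\cI$. Since $\bd(\cV)=0$, there exists $\cJ^\star\in\cA$ with $|\cJ^\star\cap\cV|=0$, i.e.\ $\cJ^\star\cap\cV=\varnothing$. For any $\cU\subseteq\cV$ we also have $\cJ^\star\cap\cU=\varnothing$. Hence
\begin{equation*}
0\leq \bd(\cU)=\tilde{\bd}(\cU)\leq|\cJ^\star\cap\cU|=0 .
\end{equation*}
Here we use the standing restriction $\bd\geq 0$. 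The only point requiring care is that the minimum defining $\tilde{\bd}(\cV)$ is attained by some element of the finite nonempty family $\cA$, which is immediate.

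\emph{Case (ii): $\bd=\bd_\varphi$ is a closed testing procedure.} By \eqref{closed_test}, $\bd_\varphi(\cV)=0$ means there exists $\cW\subseteq\cV$ with $\phi^{\cI}_{\cW}=0$ and $|\cV\backslash\cW|=0$. Thus $\cW=\cV$ and $\phi^{\cI}_{\cV}=0$.

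The key observation is monotonicity of the effective tests: for $\cU\subseteq\cV$, the collection $\{\cU'\colon \cU\subseteq\cU'\subseteq\cI\}$ contains $\{\cU'\colon \cV\subseteq\cU'\subseteq\cI\}$. Since $\phi^{\cI}$ is a minimum over these collections, $\phi^{\cI}_{\cU}\leq\phi^{\cI}_{\cV}=0$, so $\phi^{\cI}_{\cU}=0$.

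Taking $\cU$ itself as the candidate in the minimum defining $\bd_\varphi(\cU)$ gives $\bd_\varphi(\cU)\leq|\cU\backslash\cU|=0$. Nonnegativity holds because the objective is a cardinality, and the minimum is over a nonempty set since $\phi^{\cI}_{\varnothing}\leq\phi_{\varnothing}=0$. Together these give $\bd_\varphi(\cU)=0$.

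Neither case presents a real obstacle. The step most worth stating explicitly is the monotonicity $\phi^{\cI}_{\cU}\leq\phi^{\cI}_{\cV}$ for $\cU\subseteq\cV$ in case (ii), since that is where the closed testing structure is genuinely used.
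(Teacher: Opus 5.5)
Your proof is correct and follows essentially the same route as the paper. In (i) you use a minimizing $\cJ^\star\in\cA$ with $\cJ^\star\cap\cV=\varnothing$, whereas the paper directly uses the monotonicity $\min_{\cJ\in\cA}|\cJ\cap\cU|\leq\min_{\cJ\in\cA}|\cJ\cap\cV|$; in (ii) you use exactly the paper's argument, namely that $\phi^{\cI}_{\cV}=0$ and $\phi^{\cI}_{\cU}\leq\phi^{\cI}_{\cV}$ for $\cU\subseteq\cV$.
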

\begin{proof}[Proof of \lemref{downward}$(i)$]
    Since $\bd$ is perfectly coherent, we have that
\begin{equation}\label{minJcapV0}
     \min_{\cJ \in \cA} |\cJ \cap \cV| = \tilde{\bd}(\cV) =  \bd(\cV) = 0.
\end{equation}
Fix any $\cU \subseteq \cV$. Since
\begin{equation*}
    \tilde{\bd}(\cU)  = \min_{\cJ \in \cA} |\cJ \cap \cU| \leq \min_{\cJ \in \cA} |\cJ \cap \cV| = 0
\end{equation*}
by \eqref{minJcapV0} and $\tilde{\bd}(\cU) \geq 0$, it must be that $\tilde{\bd}(\cU) = 0$. Hence, $\bd(\cU) = \tilde{\bd}(\cU) = 0$ because $\bd$ is perfectly coherent.
\end{proof}

\begin{proof}[Proof of \lemref{downward}$(ii)$]
    Since $\bd$ is a closed testing procedure, there exists a suite of local tests
    $\varphi = (\phi_{\cW})_{\cW \subseteq \cI}$ such that
    \begin{equation*}
         \bd(\cS)
         =
         \min_{\cW \subseteq \cS}
         \{|\cS \backslash \cW| : \phi^{\cI}_{\cW} = 0 \}
         \quad \text{for all $\cS \subseteq \cI$.}
    \end{equation*}
    Since $\bd(\cV) = |\cV \backslash \cV| = 0$, we must have that $\phi^{\cI}_{\cV} =0$.
    
    Fix any $\cU \subseteq \cV$. Since $\phi^{\cI}_{\cV} = 0$ and
    \begin{equation*}
        \phi^{\cI}_{\cU}
        =
        \min\{\phi_{\cW} : \cU \subseteq \cW \subseteq \cI \} \leq  \min\{\phi_{\cW} : \cV \subseteq \cW \subseteq \cI\} = \phi^{\cI}_{\cV},
    \end{equation*}
    it must hold that $\phi^{\cI}_{\cU} = 0$. Hence, 
    \begin{equation*}
        \bd(\cU)
        =
        \min_{\cW \subseteq \cU}
        \{|\cU \backslash \cW| : \phi^{\cI}_{\cW} = 0 \}
        = |\cU \backslash \cU| =
        0.
    \end{equation*}
\end{proof}

\begin{lemma}\label{lem:cond}
    It holds that
    \begin{equation*}
        \bd(\cS) = \min_{\cU \subseteq \cS}
\left\{ |\cS \backslash \cU| : \bd(\cU)=0
\right\} \quad \text{for all} \quad \cS \subseteq \cI
    \end{equation*}
        if at least one of the following is true:
    \begin{enumerate}[(i)]
        \item $\bd$ is perfectly coherent. 
        \item $\bd$ is a closed testing procedure.
    \end{enumerate}
\end{lemma}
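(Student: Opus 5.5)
We need to show $\bd(\cS)=\min_{\cU\subseteq\cS}\{|\cS\backslash\cU|:\bd(\cU)=0\}$ under (i) or (ii). Denote the right-hand side by $m(\cS)$. The minimum is over a nonempty set since $\bd(\varnothing)=0$. The plan is to prove the two inequalities $\bd(\cS)\le m(\cS)$ and $\bd(\cS)\ge m(\cS)$ separately, handling (i) and (ii) in parallel where possible.

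\emph{Upper bound $\bd(\cS)\le m(\cS)$.} For any $\cU\subseteq\cS$ with $\bd(\cU)=0$, I want $\bd(\cS)\le \bd(\cU)+|\cS\backslash\cU|=|\cS\backslash\cU|$.
\begin{itemize}
\item In case (i), $\bd$ is perfectly coherent, hence coherent. Lemma 3 of \citet{Goeman2021}, applied to the disjoint sets $\cU$ and $\cS\backslash\cU$, gives the right inequality directly. Alternatively, from $\bd(\cU)=\tilde\bd(\cU)=0$ pick $\cJ\in\cA$ with $\cJ\cap\cU=\varnothing$; then $\bd(\cS)=\tilde\bd(\cS)\le|\cJ\cap\cS|\le|\cS\backslash\cU|$.
\item In case (ii), closed testing procedures are coherent (Lemma 4 of \citet{Goeman2021}), so the same inequality applies. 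More directly, $\bd(\cU)=0$ forces $\phi^{\cI}_{\cU}=0$, exactly as in the proof of \lemref{downward}$(ii)$, so $\cU$ is feasible in \eqref{closed_test} and $\bd(\cS)\le|\cS\backslash\cU|$.
\end{itemize}

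\emph{Lower bound $\bd(\cS)\ge m(\cS)$.}
\begin{itemize}
\item In case (ii), let $\cW\subseteq\cS$ attain the minimum in \eqref{closed_test}, so $\phi^{\cI}_{\cW}=0$ and $\bd(\cS)=|\cS\backslash\cW|$. Then $\bd(\cW)=|\cW\backslash\cW|=0$, so $\cW$ is feasible for $m(\cS)$ and $m(\cS)\le|\cS\backslash\cW|=\bd(\cS)$.
\item In case (i), take $\cJ\in\cA$ attaining $\tilde\bd(\cS)=\bd(\cS)=|\cJ\cap\cS|$ and set $\cU=\cS\backslash\cJ$. Then $\tilde\bd(\cU)\le|\cJ\cap\cU|=0$, so $\bd(\cU)=\tilde\bd(\cU)=0$ by perfect coherence. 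Hence $m(\cS)\le|\cS\backslash\cU|=|\cS\cap\cJ|=\bd(\cS)$.
\end{itemize}

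The main care point is in case (i): each step must convert a statement about $\tilde\bd$ into one about $\bd$, and that conversion is valid only because perfect coherence gives $\bd=\tilde\bd$ on every set. The fact that $\cA$ is nonempty and finite guarantees the minimizing $\cJ$ exists. Beyond this, the argument is routine.
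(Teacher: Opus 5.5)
Your proposal is correct and follows essentially the paper's proof: in case (i), your alternative upper-bound argument and your lower-bound construction $\cU=\cS\backslash\cJ$ are exactly the paper's, and in case (ii), your two inequalities are the two directions of the paper's observation that $\bd(\cV)=0$ if and only if $\phi^{\cI}_{\cV}=0$. The additional shortcut through Lemmas 3 and 4 of \citet{Goeman2021} is also valid, but the direct arguments already suffice.
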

\begin{proof}[Proof of \lemref{cond}$(i)$]
    Fix any $\cS \subseteq \cI$ and $\cU \subseteq \cS$ such that $\bd(\cU)=0$. Since $\bd$ is perfectly coherent,
\begin{equation*}
 \min_{\cJ \in\cA}|\cJ\cap \cU| = \tilde{\bd}(\cU) = \bd(\cU) = 0.
\end{equation*}
Hence, there exists $\cJ_{\cU}\in\cA$ such that
\begin{equation*}
\cJ_{\cU} \cap \cU=\varnothing.
\end{equation*}
It therefore follows that 
\begin{equation*}
\bd(\cS) = \tilde{\bd}(\cS)
=
\min_{\cJ \in\cA}|\cJ \cap \cS|
\leq
|\cJ_{\cU}\cap \cS|
\leq
|\cS \backslash \cU|.
\end{equation*}
Thus,
\begin{equation*}
\bd(\cS) = \min_{\cU \subseteq \cS}
\left\{ \bd(\cS) : \bd(\cU)=0
\right\} \leq \min_{\cU \subseteq \cS}
\left\{ |\cS \backslash \cU| : \bd(\cU)=0
\right\}.
\end{equation*}
Now it remains to prove the converse inequality.

Since $\bd$ is perfectly coherent, there exists a set $\cJ^\star\in\cA$ such that
\begin{equation*}
|\cJ^\star\cap \cS|
= \tilde{\bd}(\cS) = 
\bd(\cS).
\end{equation*}
Define the set
\begin{equation*}
\cU^\star
=
\cS \backslash  \cJ^\star
\end{equation*}
so that $\cU^\star\subseteq \cS$ and $\cJ^\star\cap \cU^\star=\varnothing$. It follows that
\begin{equation*}
\tilde{\bd}(\cU^\star)
=
\min_{\cJ \in\cA}|\cJ\cap \cU^\star|
\leq
|\cJ^\star\cap \cU^\star|
=
0.
\end{equation*}
Thus, $\bd(\cU^\star) = \tilde{\bd}(\cU^\star)=0$ by perfect coherence, and so,
\begin{equation*}
\min_{\cU \subseteq \cS}
\left\{ |\cS \backslash \cU| : \bd(\cU)=0
\right\}
\leq
|\cS\backslash  \cU^\star|
=
|\cJ^\star \cap \cS|
=
\bd(\cS).
\end{equation*}
\end{proof}

\begin{proof}[Proof of \lemref{cond}$(ii)$]
    Since $\bd$ is a closed testing procedure, there exists a suite of local tests $\varphi = (\phi_{\cU})_{\cU \subseteq \cI}$ such that
    \begin{equation*}
        \bd(\cS) = \bd_{\varphi}(\cS) = \min_{\cU \subseteq \cS} \{ |\cS \backslash \cU|: \phi^{\cI}_{\cU} = 0 \} \quad \text{for all $\cS \subseteq \cI$.}
    \end{equation*}
    Fix any $\cS \subseteq \cI$. If $\bd_{\varphi}(\cS) = 0$, then $\phi^{\cI}_{\cS} = 0$ must hold since $|\cS \backslash \cS| = 0$. If $\phi^{\cI}_{\cS} = 0$, then $\bd_{\varphi}(\cS)  = |\cS \backslash \cS| = 0$. Hence, $\bd_{\varphi}(\cV) = 0$ if and only if $\phi^{\cI}_{\cV} = 0$, which means that
    \begin{equation*}
        \bd_{\varphi}(\cS) = \min_{\cU \subseteq \cS}\{ |\cS \backslash \cU|: \phi^{\cI}_{\cU} = 0 \} = \min_{\cU \subseteq \cS}\{ |\cS \backslash \cU|: \bd_{\varphi}(\cU) = 0 \}.
    \end{equation*}
\end{proof}

\subsection{Proof of \thmref{iff_perfect_coherence}}
We prove \thmref{iff_perfect_coherence} in two parts. First, we show that if $\bd$ is perfectly coherent, then $\bd$ is a closed testing procedure. Then we prove the converse. 

\begin{proof}[Proof that if $\bd$ is perfectly coherent, then $\bd$ is a closed testing procedure]
Suppose $\bd$ is perfectly coherent. For each $\cS \subseteq \cI$, let
    \begin{equation*}
        \phi_{\cS} = \mathbbm{1}\{  \bd(\cS) > 0 \}
    \end{equation*}
    be the local test for the intersection hypothesis $\cH_{\cS}$. Then
    \begin{align*}
       \sup_{\P \in \cH_{\cS}} \P \left( \phi_{\cS} = 1 \right) 
        = \sup_{\P \in \cH_{\cS}} \P \left( \bd(\cS) > 0  \right) 
        = \sup_{\P \in \cH_{\cS}} \P \left( \bd(\cS) > |\cS \cap \cI_1|  \right) \leq \alpha,
    \end{align*}
    noting that $|\cS \cap \cI_1| = 0$. Hence, $\phi_{\cS} = \mathbbm{1}\{  \bd(\cS) > 0 \}$ is a valid local test for $\cH_{\cS}$. For the corresponding suite $\varphi = (\phi_{\cS})_{\cS \subseteq \cI}$, the effective local test for any subset $\cU \subseteq \cI$ is
    \begin{align*}
        \phi^{\cI}_{\cU}
        & = \min\{ \mathbbm{1}\{ \bd(\cV) > 0 \} : \cU \subseteq \cV \subseteq \cI \}.
    \end{align*}
    Fix any $\cU \subseteq \cI$. If $\bd(\cU) = 0$, then $\phi^{\cI}_{\cU} = 0$. If $\phi^{\cI}_{\cU} = 0$, then there exists a $\cV^\star$ such that $\bd(\cV^\star) = 0$ where $\cU \subseteq \cV^\star$, which implies that $\bd(\cU) = 0$ by \lemref{downward}$(i)$. Hence, $\bd(\cU) = 0$ if and only if $\phi^{\cI}_{\cU} = 0$, which means that
    \begin{align*}
        \bd_{\varphi}(\cS) = \min_{\cU \subseteq \cS}\{ |\cS \backslash \cU|: \phi^{\cI}_{\cU} = 0 \}  = \min_{\cU \subseteq \cS}\{ |\cS \backslash \cU|: \bd(\cU) = 0 \} = \bd(\cS),
    \end{align*}
    where the final equality is a result of \lemref{cond}$(i)$. Thus, $\bd$ is a closed testing procedure.
\end{proof}

\begin{proof}[Proof that if $\bd$ is a closed testing procedure, then $\bd$ is perfectly coherent]
Fix any $\cS \subseteq \cI$. Since $\bd$ is a closed testing procedure, there exists a set $\cU^\star\subseteq \cS$ by \lemref{cond}$(ii)$ such that
\begin{equation*}
    \bd(\cU^\star)=0
    \quad \text{and} \quad
    \bd(\cS)=|\cS\backslash \cU^\star|.
\end{equation*}
Define the set
\begin{equation*}
    \cJ^{\star} = \cI \backslash \cU^\star
\end{equation*}
so that
\begin{equation}\label{equal_maximum}
    \bd(\cS) 
    =
    |\cS\backslash \cU^\star|
    = |\cJ^{\star}\cap \cS|.
\end{equation}
To complete this proof, it suffices to show that
\begin{equation}\label{JstarcapSequalminJcapS}
    |\cJ^{\star}\cap \cS| = \min_{\cJ \in \cA}|\cJ \cap \cS|,
\end{equation}
since this, together with \eqref{equal_maximum}, would imply that $\bd(\cS) = \tilde{\bd}(\cS)$, i.e., $\bd$ is perfectly coherent.

Fix any $\cV\subseteq \cI$. Since $\cV \cap \cU^\star \subseteq \cU^\star$ and $\bd(\cU^\star)=0$, we have by \lemref{downward}$(ii)$ that $\bd(\cV \cap \cU^\star)  = 0$. Therefore, 
\begin{equation*}
      \bd(\cV)
    =
    \min_{\cU\subseteq \cV}\{|\cV\backslash  \cU|:\bd(\cU)=0\}
    \leq
    |\cV\backslash  (\cV\cap \cU^\star)|
    =
    |\cV\backslash  \cU^\star|.
\end{equation*}

Since $|\cV \backslash \cU^\star| = |\cJ^\star \cap \cV|$, it follows from the above display that
\begin{equation*}
    \bd(\cV)\leq |\cJ^{\star}\cap \cV|
    \qquad\text{for all } \cV \subseteq \cI,
\end{equation*}
which means that $\cJ^{\star} \in \cA$.

By \lemref{d_hat_improve} and \eqref{equal_maximum}, we have that 
\begin{equation}\label{JS_upper}
    \min_{\cJ \in \cA} |\cJ \cap \cS| \geq \bd(\cS) = |\cJ^{\star} \cap \cS|.
\end{equation}
Since it was established that $\cJ^{\star} \in \cA$, we must also have that
\begin{equation}\label{JS_lower}
   \min_{\cJ \in \cA} |\cJ \cap \cS| \leq |\cJ^{\star} \cap \cS|.
\end{equation}
Combining \eqref{JS_upper} and \eqref{JS_lower} therefore gives \eqref{JstarcapSequalminJcapS}.

\end{proof}

\end{document}